\newtheorem{assumption}{\hspace{0pt}\bf Assumption}
\newtheorem{remark}{\hspace{0pt}\bf Remark}
\newtheorem{definition}{\hspace{0pt}\bf Definition}
\newtheorem{proposition}{\hspace{0pt}\bf Proposition}
\newtheorem{lemma}{\hspace{0pt}\bf Lemma}
\newtheorem{theorem}{\hspace{0pt}\bf Theorem}
\newcommand{\xmark}{\ding{55}}%
\begin{document}

\title{A Unified Analysis of Extra-gradient and Optimistic Gradient Methods for Saddle Point Problems: Proximal Point Approach}

\date{}

\author{Aryan Mokhtari\thanks{The authors are in alphabetical order.} \thanks{Laboratory for Information \& Decision Systems, Massachusetts Institute of Technology, Cambridge, MA, USA. aryanm@mit.edu}, Asuman Ozdaglar$^*$\thanks{Department of Electrical Engineering and Computer Science, Massachusetts Institute of Technology, Cambridge, MA, USA. asuman@mit.edu.}, Sarath Pattathil$^*$\thanks{Department of Electrical Engineering and Computer Science, Massachusetts Institute of Technology, Cambridge, MA, USA. sarathp@mit.edu.}
}

\maketitle
\begin{abstract}
\noindent In this paper we consider solving saddle point problems using two variants of Gradient Descent-Ascent algorithms, Extra-gradient (EG) and Optimistic Gradient Descent Ascent (OGDA) methods. We show that both of these algorithms admit a unified analysis as approximations of the classical proximal point method for solving saddle point problems. This viewpoint enables us to develop a new framework for analyzing EG and OGDA for bilinear and strongly convex-strongly concave settings. Moreover, we use the proximal point approximation interpretation to generalize the results for OGDA for a wide range of parameters. 
\end{abstract}


\section{Introduction}\label{sec:intro}

In this paper, we study the following saddle point problem
\begin{equation}\label{main_prob}
\min_{\bbx \in \reals^m}\max_{\bby \in \reals^n} \ f(\bbx,\bby),
\end{equation}
where the function $f:\reals^m\times \reals^n \to \reals$ is a convex-concave function, i.e., $f(\cdot, \bby)$ is convex for all $\bby \in \reals^n$ and $f(\bbx, \cdot)$ is concave for all $\bbx \in \reals^m$. We are interested in computing a saddle point of problem \eqref{main_prob} defined as a pair $(\bbx^*, \bby^*) \in \reals^m \times \reals^n $ that satisfies the condition 
$$ f(\bbx^*, \bby) \leq f(\bbx^*, \bby^*) \leq f(\bbx, \bby^*), $$
for all $\bbx \in \reals^m, \bby \in \reals^n$. This problem formulation appears in several areas, including zero-sum games \citep{basar1999dynamic}, robust optimization \citep{ben2009robust}, robust control \citep{hast2013pid}  and more recently in machine learning in the context of Generative Adversarial Networks (GANs); see \citep{goodfellow2014generative} for an introduction to GANs and \citep{pmlr-v70-arjovsky17a} for the formulation of Wasserstein GANs.

Motivated by the interest in computational methods for solving the minmax problem in \eqref{main_prob}, in this paper we consider convergence rate analysis of discrete-time gradient based optimization algorithms for finding a saddle point of problem \eqref{main_prob}. We focus on Extra-gradient (EG) and Optimistic Gradient Descent Ascent (OGDA) methods, which have attracted much attention in the recent literature because of their superior empirical performance in GAN training (see \cite{DBLP:journals/corr/abs-1802-06132}, \cite{DBLP:journals/corr/abs-1711-00141}). EG is a classical method which was introduced in \cite{korpelevich1976extragradient}. Its linear rate of convergence for smooth and strongly convex-strongly concave functions $f(\bbx, \bby)$ and bilinear functions, i.e., $f(\bbx, \bby) = \bbx^{\top} \bbA \bby$, was established in the variational inequality literature (see \citep{facchinei2007finite} and \citep{tseng_1}). The convergence properties of OGDA were recently studied in \cite{DBLP:journals/corr/abs-1711-00141}, which showed the convergence of the iterates to a neighborhood of the solution when the objective function is bilinear.  The recent paper \cite{DBLP:journals/corr/abs-1802-06132} used a dynamical system approach to prove the linear convergence of the OGDA and EG methods for the special case when $f(\bbx,\bby) = \bbx^{\top} \bbA \bby$ and the matrix $\bbA$ is square and full rank. It also presented a linear convergence rate of the vanilla Gradient Ascent Descent (GDA) method when the objective function $f(\bbx, \bby)$ is strongly convex-strongly concave. In a recent paper \cite{gidel2018variational}, a variant of the EG method is considered, relating it to OGDA updates, and show the linear convergence of the corresponding EG iterates in the case where $f(\bbx, \bby)$ is strongly convex-strongly concave\footnote{$f(\bbx,\bby)$ is strongly convex-strongly concave when it is strongly convex with respect to $\bbx$ and strongly concave with respect to $\bby.$} (though without showing the convergence rate for the OGDA iterates).

The previous works use disparate approaches to analyze EG and OGDA methods, obtaining results in several different settings and making it difficult to see the connections and unifying principles between these iterative methods. In this paper, we show that the update of EG and OGDA can be interpreted as approximations of the Proximal Point (PP) method, introduced in \cite{martinet1970breve} and studied in \cite{rockafellar1976monotone}. This viewpoint allows us to understand why EG and OGDA are convergent for a bilinear problem. It also enables us to generalize OGDA (in terms of parameters) and obtain new convergence rate results for these generalized algorithms for the bilinear case. Our results recover the linear convergence rate results of \cite{tseng_1} for EG and the linear rate results of \cite{DBLP:journals/corr/abs-1802-06132} for the bilinear case of OGDA. We obtain new linear convergence rate estimates for OGDA for the strongly convex-strongly concave case as well as linear convergence rates for the generalized OGDA method. 
\begin{figure}[t!]
  \centering
\includegraphics[width=0.5\columnwidth]{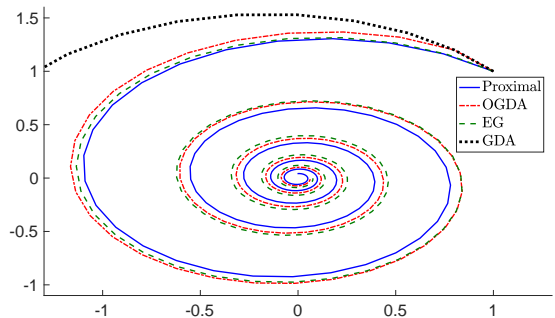}
\vspace{-2mm}
    \caption{Convergence trajectories of proximal point (PP), extra-gradient (EG), optimistic gradient descent ascent (OGDA), and gradient descent ascent (GDA) for $\min_x \max_y xy$. The proximal point method has the fastest convergence. EG and OGDA approximate the trajectory of PP and both converge to the optimal solution. The GDA method is the only method that diverges.
}\vspace{-2mm}
 \label{fig_trajec}
 \end{figure}



\textbf{Related Work.} The result in \cite{tseng_1} showed convergence of EG method to an $\epsilon$ optimal solution with iteration complexity of $\mathcal{O}(\kappa \log(1/\epsilon))$ (see Assumption \ref{ass:bili} and  Remark \ref{cond_numb_remark} for the definition of $\kappa$) , when the function $f(\bbx, \bby)$ is smooth and strongly convex-strongly concave and when $f$ is bilinear. A variational inequality perspective of saddle point problems was used in proving these results. More recently, \cite{DBLP:journals/corr/abs-1802-06132} analyzed EG and OGDA for the case when $f$ is bilinear, using a dynamical system perspective. The authors showed a complexity of $\mathcal{O}(\kappa \log(1/\epsilon))$ for OGDA and a complexity of $\mathcal{O}(\kappa^2 \log(1/\epsilon))$ for EG, without anlyzing the general strongly convex-strongly concave setting. In another recent independent work, \cite{gidel2018variational} analyzed the convergence of the OGDA method using the interpretation that OGDA is a variant of EG using extrapolation from the past.  In this connection, the OGDA iterates are the ``midpoints" 
whereas \citep{gidel2018variational} provides a convergence of the original points (not the OGDA iterates) to an error of $\eps$ in $\mathcal{O}(\kappa \log(1/\eps))$. In this paper, we establish an overall complexity of $\mathcal{O}(\kappa \log(1/\epsilon))$ for both OGDA and EG in bilinear and strongly convex-strongly concave settings by interpreting these methods as approximations of the proximal point method. The results of our paper are compared with existing results in Table~\ref{table:1}. Apart from the algorithms summarized in Table~\ref{table:1}, we also propose a generalized version of OGDA which extends the classical OGDA algorithm to a wider range of stepsize parameters and show its convergence for bilinear case.



There are several papers that study the convergence rate of algorithms for solving saddle point problems \textit{over a compact set}. Nemirovski \cite{nemirovski2004prox} showed $\mathcal{O}(1/k)$ convergence rate for the mirror-prox algorithm (a special case of which is the EG method) in convex-concave saddle point problems over compact sets. This result was extended to unbounded sets by \cite{monteiro_1} where a different error criterion was used. \cite{nedic2009subgradient} analyzed the (sub)Gradient Descent Ascent (GDA) algorithm for convex-concave saddle point problems when the (sub)gradients are bounded over the constraint set. 

Several papers study the special case of Problem~\eqref{main_prob} when the objective function is of the form $f(\bbx, \bby) = g(\bbx) + \bbx^{\top} \bbA \bby - h(\bby)$, i.e., the cross term is bilinear. For this case, when the functions $g$ and $h$ are strongly convex, primal-dual gradient-type methods converge linearly \citep{chen1997convergence,bauschke2011convex}.
Further, \cite{DBLP:journals/corr/abs-1802-01504} showed that GDA achieves a linear convergence rate when $g$ is convex and $h$ is strongly convex. 
\cite{chambolle2011first} introduced a primal-dual variant of the proximal point method that converges to a saddle point at a sublinear rate 
when  $g$ and $h$ are convex and at a linear rate when $g$ and $h$ are strongly convex.

For the case when $f(\bbx, \bby)$ is strongly concave with respect to $\bby$, but possibly nonconvex with respect to $\bbx$, \cite{sanjabi2018convergence} provided convergence to a first-order stationary point using an algorithm that requires running multiple updates with respect to $\bby$ at each step. Recently, \cite{sanjabi2018solving} extended this result to the setting when $f$ is Polyak-Lojasiewicz with respect to $\bby$. 

There are several papers which solve  stochastic version of Problem \eqref{main_prob}, i.e., the case where one does not have access to the exact gradients of the function, but in fact an unbiased estimate of it. Papers including \cite{nemirovski2009robust, juditsky2011solving, chen2014optimal} solve this problem in the case where the objective function is convex in $\bbx$ and concave in $\bby$. More recently \cite{palaniappan2016stochastic} uses a variance reduced version of the proximal gradient method and \cite{chavdarova2019reducing} uses a variance reduced version of the EG method to solve Problem \eqref{main_prob} when the function is strongly convex in $\bbx$ and strongly concave in $\bby$ and the function has a finite sum structure.

Optimistic gradient methods have also been studied in the context of convex online learning. In particular, \cite{DBLP:conf/colt/RakhlinS13,rakhlin2013optimization} introduced the general version of the Optimistic Mirror Descent algorithm in the framework of online optimization. Prior to this work, a special case of Optimistic Mirror descent was analyzed by \cite{DBLP:journals/jmlr/ChiangYLMLJZ12}, again in the context of online learning.

\begin{table}[t!]
\centering
\captionsetup{justification=centering}
\begin{tabular}{ c|c|c|c } 
 \hline
 Reference & Assumptions on $f(\bbx, \bby)$ & Rate (EG) & Rate (OGDA) \\ 
 \hline \hline
 \cite{DBLP:journals/corr/abs-1802-06132}  & Bilinear & $\mathcal{O} (\kappa^2 \log(1/\epsilon))$ & $\mathcal{O} (\kappa \log(1/\epsilon))$ \\ 
 \hline
  \cite{DBLP:journals/corr/abs-1802-06132}  & Strongly Convex-Strongly Concave & \xmark & \xmark \\ 
 \hline
  \cite{gidel2018variational} & Bilinear  & \xmark & \xmark \\ 
 \hline
 \cite{gidel2018variational} & Strongly Convex-Strongly Concave  & \xmark & $\mathcal{O} (\kappa \log(1/\epsilon))^{\star}$ \\ 
 \hline
  \cite{tseng_1} & Bilinear & $\mathcal{O} (\kappa \log(1/\epsilon))$ & \xmark \\
 \hline
 \cite{tseng_1} & Strongly Convex-Strongly Concave & $\mathcal{O} (\kappa \log(1/\epsilon))$ & \xmark \\
 \hline \hline \hline
 This paper & Bilinear & $\mathcal{O} (\kappa \log(1/\epsilon))$ & $\mathcal{O} (\kappa \log(1/\epsilon))$ \\
 \hline
This paper & Strongly Convex-Strongly Concave & $\mathcal{O} (\kappa \log(1/\epsilon))$ & $\mathcal{O} (\kappa \log(1/\epsilon))$ 
\vspace{0.33cm}
\end{tabular}
\vspace{-2mm}
\caption{Comparison of rates in different papers \\($^{\star}$- \cite{gidel2018variational}  shows the convergence of the half points and not the original OGDA iterates.)
}\vspace{-5mm}
\label{table:1}
\end{table}

\textbf{Outline.} The rest of the paper is organized as follows. We start the paper by presenting some definitions and preliminaries required for presenting our results in Section~\ref{sec:preliminaries}. Then, we revisit the Proximal Point (PP) point method in Section~\ref{sec:prox_point_method} and present its convergence properties for bilinear  (Theorem~\ref{thm:bilinear_ppm}) and general strongly convex-strongly concave (Theorem~\ref{thm:scvx_sccv_ppm}) problems . 
In Section~\ref{sec:OGD}, we show that the Optimistic Gradient Descent Ascent (OGDA) is an approximation of PP (Proposition~\ref{prop:OGDA_Approx}) and prove its linear convergence rate for bilinear (Theorem~\ref{thm:bili}) and strongly convex-strongly concave (Theorem~\ref{thm:scv})  problems. We generalize the OGDA method in terms of its parameters and show the convergence of the generalized OGDA method for the bilinear case (Theorem \ref{thm:bili_g}). 
In Section~\ref{sec:EG}, we recap the update of Extra-gradient (EG) method for solving a saddle point problem. Then, we show that EG can be interpreted as an approximation of PP (Proposition~\ref{prop:EG_Approx}) and use this interpretation to study the convergence properties of EG in bilinear problems (Theorem~\ref{thm:EG_bilinear}) and general strongly convex-strongly concave problems (Theorem~\ref{thm:EG_scv}). 
We close the paper with concluding remarks. Due to space limitation our numerical experiments and proofs are presented in the supplementary material. 

\textbf{Notation.} Lowercase boldface $\bbv$ denotes a vector and uppercase boldface $\bbA$ denotes a matrix. We use $\|\bbv\|$ to denote the Euclidean norm of vector $\bbv$. Given a multi-input function $f(\bbx,\bby)$, its gradient with respect to $\bbx$ and $\bby$ at $(\bbx_0,\bby_0)$ are denoted by $\nabla_\bbx f(\bbx_0,\bby_0)$ and $\nabla_\bby f(\bbx_0,\bby_0)$, respectively. We refer to the largest and smallest eigenvalues of a matrix $\bbA$ by $\lambda_{\max}(\bbA)$ and  $\lambda_{\min}(\bbA)$, respectively.


\section{Preliminaries}\label{sec:preliminaries}

In this section we present properties and notations used in our results.

\begin{definition} \label{def:lips_grad}
A function $\phi: \reals^n\to\reals$ is $L$-smooth if it has $L$-Lipschitz continuous gradients on $\reals^n$, i.e., for any $\bbx, \hbx \in \reals^n$, we have 
$|| \nabla \phi(\bbx) - \nabla \phi(\hbx) || \leq L ||\bbx - \hbx||$.
\end{definition}

%

\begin{definition}  \label{def:strong_convex}
A continuously differentiable function $\phi: \reals^n\to\reals$ is $\mu$-strongly convex on $\mathbb{R}^n$ if for any $\bbx, \hbx \in \reals^n$, we have
$\phi(\hbx) \geq \phi(\bbx) + \nabla \phi(\bbx) ^T (\hbx - \bbx) + \frac{\mu}{2}  ||\hbx - \bbx||^2.$
 Further, $\phi(\bbx)$ is $\mu$-strongly concave if $-\phi(\bbx)$ is $\mu$-strongly convex. If we set $\mu=0$, then we recover the definition of convexity for a continuous differentiable function.
\end{definition} 

\begin{definition}  \label{def:saddle_point}
The pair $(\bbx^*, \bby^*)$ is a saddle point of a convex-concave function $f: \reals^n\times \reals^m \to \reals$, if for any $\bbx \in \reals^n$ and $ \bby \in \reals^m$, we have
$f(\bbx^*, \bby) \leq f(\bbx^*, \bby^*) \leq f(\bbx, \bby^*).$
\end{definition} 


Throughout the paper, we consider two specific cases of Problem \eqref{main_prob} stated in the next set of assumptions.

\begin{assumption}
\label{ass:bili}
The function $f(\bbx, \bby)$ is a bilinear function of the form 
$f(\bbx, \bby) = \bbx^{\top} \bbB \bby$, where $\bbB \in \reals^{d \times d}$ is a square full-rank matrix. The point $(\bbx^*, \bby^*) = (\bb0, \bb0)$ is the unique saddle point. In this case, we define the condition number of the problem as $\kappa := \frac{\lambda_{\max}(\bbB^\top  \bbB)}{\lambda_{\min}(\bbB^\top \bbB)}$.
\end{assumption}

\begin{assumption}
\label{ass:scsc_1}
The function $f(\bbx, \bby)$ is continuously differentiable in $\bbx$ and $\bby$. Further, $f$ is $\mu_x$-strongly convex in $\bbx$ and $\mu_y$-strongly concave in $\bby$. The unique saddle point of $f(\bbx,\bby)$ is denoted by $(\bbx^*, \bby^*)$. We define $\mu = \min \{ \mu_x, \mu_y \}$.
\end{assumption}


\begin{assumption}
\label{ass:scsc}
The gradient $\nabla_{\bbx}f(\bbx, \bby)$, is $L_x$-Lipschitz in $\bbx$ and $L_{xy}$-Lipschitz in $\bby$, i.e., \begin{align*}
\| \nabla_{\bbx}f(\bbx_1, \bby) - \nabla_{\bbx}f(\bbx_2, \bby)\| &\leq L_x\| \bbx_1 - \bbx_2 \| \quad \forall \: \bby,  \\
\| \nabla_{\bbx}f(\bbx, \bby_1) - \nabla_{\bbx}f(\bbx, \bby_2)\| &\leq L_{xy}\|\bby_1 - \bby_2 \| \quad {\forall} \: \bbx.
\end{align*}
Moreover, the gradient $\nabla_{\bby}f(\bbx, \bby)$, is $L_y$-Lipschitz in $\bby$ and $L_{yx}$-Lipschitz in $\bbx$, i.e., \begin{align*}
\| \nabla_{\bby}f(\bbx, \bby_1) - \nabla_{\bby}f(\bbx, \bby_2)| &\leq L_y\| \bby_1 - \bby_2 \| \quad {\forall} \: \bbx, \\
\| \nabla_{\bby}f(\bbx_1, \bby) - \nabla_{\bby}f(\bbx_2, \bby)\| &\leq L_{yx}\|\bbx_1 - \bbx_2 \| \quad {\forall} \: \bby.
\end{align*}
We define $L = \max \{ L_x, L_{xy}, L_y, L_{yx} \}$.
\end{assumption}

\begin{remark}\label{cond_numb_remark}
Under Assumptions \ref{ass:scsc_1} and \ref{ass:scsc},  we define the condition number of the problem as $\kappa := L/\mu$.
\end{remark}


In the following sections, we present and analyze three different iterative algorithms for solving the saddle point problem introduced in~\eqref{main_prob}. The $k$-th iterates of any of these algorithms are denoted by $(\bbx_k, \bby_k)$. We denote 
\begin{equation}\label{r_k_definition}
r_k = \| \bbx_k - \bbx^*\|^2 + \|\bby_k - \bby^* \|^2,
\end{equation}
as the distance to the saddle point $(\bbx^*, \bby^*)$ at iteration $k$.



\section{Proximal Point method}\label{sec:prox_point_method}

We start our analysis by Proximal Point (PP) method, which will serve as a benchmark for the analysis of Extra-gradient and Optimistic Gradient Descent Ascent methods.
The update of PP method for minimizing a convex function $h$ is defined as
\begin{equation}\label{eq:prox_point_org_update}
\bbx_{k+1} = \text{prox}_{\frac{1}{\eta},h} (\bbx_k) = \argmin\left\{ h(\bbx) + \frac{1}{2\eta}\|\bbx-\bbx_k\|^2\right\},
\end{equation}
where $\eta$ is a positive scalar \citep{bertsekas1999nonlinear,beck2017first}. Using the optimality condition of the update in \eqref{eq:prox_point_org_update}, one can also write the update of the PP method as 
$\bbx_{k+1} = \bbx_k - \eta \nabla h (\bbx_{k+1})$.
This expression shows that the PP method is an \textit{implicit algorithm}. 
Convergence properties of PP for convex minimization have been extensively studied \citep{rockafellar1976augmented,guler1991convergence,ferris1991finite,eckstein1992douglas,parikh2014proximal,beck2017first}. The extension of PP for solving saddle point problems has been also studied in~
\cite{rockafellar1976monotone}.
Here, we recap the update of PP for solving the min-max problem in \eqref{main_prob}. To do so, we define the iterates $\{\bbx_{k+1},\bby_{k+1}\} $ as the unique solution to the saddle point problem
\begin{align}\label{eq:prox_point_min_max_update_0}
\min_{\bbx \in \reals^m} \max_{\bby \in \reals^n} \left\{ f(\bbx,\bby) + \frac{1}{2\eta}\|\bbx-\bbx_k\|^2- \frac{1}{2\eta}\|\bby-\bby_k\|^2\right\}.
\end{align}
Using the optimality conditions of \eqref{eq:prox_point_min_max_update_0}
(which are necessary and sufficient since the problem in \eqref{eq:prox_point_min_max_update_0}
is convex), the update of the PP method for the saddle point problem in \eqref{main_prob} can be written as 
\begin{align}\label{eq:prox_point_min_max_update_final}
\bbx_{k+1}=  \bbx_k - \eta \nabla_{\bbx} f(\bbx_{k+1},\bby_{k+1}),\qquad
\bby_{k+1} =  \bby_k + \eta \nabla_{\bby} f(\bbx_{k+1},\bby_{k+1}).
\end{align}


Note that implementing the system of updates in \eqref{eq:prox_point_min_max_update_final} requires computing the operators $(\bbI+\eta \nabla_\bbx f)^{-1}$ and $(\bbI+ \eta \nabla_\bby f)^{-1}$, and, therefore, may not be computationally affordable for any general function $f$. 

%
In the following theorem, we show that the PP method converges linearly to $(\bbx^*,\bby^*)=(\bb0,\bb0)$ which is the unique solution of the problem $\min_\bbx\max_\bby\bbx^\top\bbB \bby$. This result was established in Theorem 2 of \citep{rockafellar1976monotone} and we mention it here for completeness and we later use it as a benchmark.

%
\begin{theorem}\label{thm:bilinear_ppm}
Consider the saddle point problem in \eqref{main_prob} under Assumption \ref{ass:bili} and the proximal point method in \eqref{eq:prox_point_min_max_update_final}. Further, recall the definition of $r_k$ in \eqref{r_k_definition}. Then, for any $\eta>0$, the iterates $\{\bbx_k,\bby_k\}_{k\geq 0}$ generated by the proximal point method satisfy 
\begin{equation}
r_{k+1} \leq \frac{1}{1\!+\!\eta^2\lambda_{min}(\bbB^\top \bbB)} r_k. \nonumber
\end{equation}
\end{theorem}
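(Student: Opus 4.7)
The plan is to exploit the implicit character of the PP update to get an exact identity relating $r_k$ and $r_{k+1}$, and then use a single eigenvalue bound at the end. Since $\bbx^* = \bby^* = \bb0$ under Assumption~\ref{ass:bili}, we have $r_k = \|\bbx_k\|^2 + \|\bby_k\|^2$, so I only need to compare $\|\bbx_{k+1}\|^2+\|\bby_{k+1}\|^2$ with $\|\bbx_k\|^2+\|\bby_k\|^2$.

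First, I would specialize the PP updates in \eqref{eq:prox_point_min_max_update_final} to the bilinear case, where $\nabla_\bbx f = \bbB\bby$ and $\nabla_\bby f = \bbB^\top\bbx$, so that the implicit system reads
\begin{equation*}
\bbx_k = \bbx_{k+1} + \eta \bbB \bby_{k+1}, \qquad \bby_k = \bby_{k+1} - \eta \bbB^\top \bbx_{k+1}.
\end{equation*}
The key observation is that I do not need to solve this system explicitly; I just need to square and add.

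Next, I would take squared norms on both sides and sum:
\begin{equation*}
\|\bbx_k\|^2 + \|\bby_k\|^2 = \|\bbx_{k+1}\|^2 + \|\bby_{k+1}\|^2 + 2\eta\bigl(\bbx_{k+1}^\top \bbB \bby_{k+1} - \bby_{k+1}^\top \bbB^\top \bbx_{k+1}\bigr) + \eta^2\bigl(\|\bbB\bby_{k+1}\|^2 + \|\bbB^\top \bbx_{k+1}\|^2\bigr).
\end{equation*}
The cross terms cancel exactly (this is the reason the PP update is so clean for bilinear problems), yielding
\begin{equation*}
r_k = r_{k+1} + \eta^2\bigl(\|\bbB\bby_{k+1}\|^2 + \|\bbB^\top \bbx_{k+1}\|^2\bigr).
\end{equation*}

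Finally, I would lower bound each quadratic form using the smallest eigenvalue: $\|\bbB\bby_{k+1}\|^2 \ge \lambda_{\min}(\bbB^\top\bbB)\|\bby_{k+1}\|^2$ and $\|\bbB^\top\bbx_{k+1}\|^2 \ge \lambda_{\min}(\bbB\bbB^\top)\|\bbx_{k+1}\|^2 = \lambda_{\min}(\bbB^\top\bbB)\|\bbx_{k+1}\|^2$, where the last equality uses that $\bbB$ is square so $\bbB^\top\bbB$ and $\bbB\bbB^\top$ share the same spectrum (and this spectrum is bounded away from zero because $\bbB$ is full rank). Summing these bounds gives $r_k \geq (1+\eta^2\lambda_{\min}(\bbB^\top\bbB))r_{k+1}$, which is the claimed contraction after rearrangement.

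I do not anticipate a serious obstacle here: the implicitness of PP is what makes the argument essentially mechanical, because the antisymmetry of the bilinear operator $(\bbx,\bby)\mapsto(\bbB\bby,-\bbB^\top\bbx)$ kills the cross terms in one step. The only mild point worth spelling out is that the implicit system above has a unique solution (so the updates are well defined), which follows since the operator $\bbI + \eta^2 \bbB^\top\bbB$ is positive definite and hence invertible.
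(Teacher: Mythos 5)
Your proposal is correct, and it takes a genuinely different and more economical route than the paper. The paper first solves the implicit system to obtain the explicit update $\bbx_{k+1} = (\bbI+\eta^2\bbB\bbB^\top)^{-1}(\bbx_k-\eta\bbB\bby_k)$ and $\bby_{k+1} = (\bbI+\eta^2\bbB^\top\bbB)^{-1}(\bby_k+\eta\bbB^\top\bbx_k)$, then expands the squared norms, invokes a separate commutation lemma (proved via the SVD of $\bbB$) to cancel the cross terms, and after several algebraic reductions arrives at the exact identity $r_{k+1} = \bbx_k^\top(\bbI+\eta^2\bbB\bbB^\top)^{-1}\bbx_k + \bby_k^\top(\bbI+\eta^2\bbB^\top\bbB)^{-1}\bby_k$, from which the rate follows by bounding the largest eigenvalue of the inverses. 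You instead stay with the implicit equations, square and add, and let the antisymmetry of $(\bbx,\bby)\mapsto(\bbB\bby,-\bbB^\top\bbx)$ kill the cross terms in one line, producing the dual identity $r_k = r_{k+1} + \eta^2(\|\bbB\bby_{k+1}\|^2+\|\bbB^\top\bbx_{k+1}\|^2)$ and finishing with a single $\lambda_{\min}$ bound; your remark that $\lambda_{\min}(\bbB\bbB^\top)=\lambda_{\min}(\bbB^\top\bbB)$ for square $\bbB$ is exactly the point that needs saying, and your well-definedness remark is also in order. What your argument buys is brevity and transparency --- it is the standard firm-nonexpansiveness computation for the proximal point operator, requiring no matrix inversion or SVD. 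What the paper's heavier route buys is the explicit resolvent form and the commutation lemma themselves, which are reused verbatim in the later proofs for EG and OGDA, where those algorithms are analyzed as perturbations of the explicit PP update; so the extra machinery is an investment rather than waste, but for this theorem in isolation your proof is the cleaner one.
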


%

%

In the following theorem, we characterize the convergence rate of PP for a function $f(\bbx,\bby)$ that is strongly convex with respect to $\bbx$ and strongly concave with respect to $\bby$. Once again, this result was established in \citep{rockafellar1976monotone} and we mention it here for completeness and we later use it as a benchmark.
%
\begin{theorem}\label{thm:scvx_sccv_ppm}
Consider the saddle point problem in \eqref{main_prob} under Assumption \ref{ass:scsc_1} and the proximal point method in \eqref{eq:prox_point_min_max_update_final}. Further, recall the definition of $r_k$ in \eqref{r_k_definition}. Then, for any $\eta>0$, the iterates $\{\bbx_k,\bby_k\}_{k\geq 0}$ generated by the proximal point method satisfy
\begin{align}
r_{k+1} \leq \frac{1}{1+\eta\mu} r_k. \nonumber
\end{align}
\end{theorem}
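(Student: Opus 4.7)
The plan is to exploit the implicit nature of the proximal point update together with strong convex-concavity to obtain a one-step contraction. First I would expand $\|\bbx_{k+1}-\bbx^*\|^2$ using the identity (a direct consequence of the law of cosines)
\[
\|\bbx_{k+1}-\bbx^*\|^2 = \|\bbx_k-\bbx^*\|^2 - \|\bbx_k-\bbx_{k+1}\|^2 - 2\langle \bbx_k-\bbx_{k+1},\,\bbx_{k+1}-\bbx^*\rangle,
\]
and then substitute $\bbx_k-\bbx_{k+1}=\eta\nabla_\bbx f(\bbx_{k+1},\bby_{k+1})$ from \eqref{eq:prox_point_min_max_update_final}. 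The analogous identity for the $\bby$ component combined with $\bby_k-\bby_{k+1}=-\eta\nabla_\bby f(\bbx_{k+1},\bby_{k+1})$, when summed and after dropping the two nonnegative squared-difference terms, yields
\[
r_{k+1} \leq r_k - 2\eta\bigl[\langle \nabla_\bbx f(\bbx_{k+1},\bby_{k+1}),\bbx_{k+1}-\bbx^*\rangle - \langle \nabla_\bby f(\bbx_{k+1},\bby_{k+1}),\bby_{k+1}-\bby^*\rangle\bigr].
\]

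Next I would lower-bound the bracketed quantity using Assumption~\ref{ass:scsc_1}. Strong convexity of $f(\cdot,\bby_{k+1})$ evaluated at $\bbx^*$ with reference point $\bbx_{k+1}$ gives
\[
\langle \nabla_\bbx f(\bbx_{k+1},\bby_{k+1}),\bbx_{k+1}-\bbx^*\rangle \geq f(\bbx_{k+1},\bby_{k+1}) - f(\bbx^*,\bby_{k+1}) + \tfrac{\mu_x}{2}\|\bbx_{k+1}-\bbx^*\|^2,
\]
and strong concavity of $f(\bbx_{k+1},\cdot)$ at $\bby^*$ produces the symmetric inequality for the $\bby$ term. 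Summing these, the function-value contributions combine into $f(\bbx_{k+1},\bby^*)-f(\bbx^*,\bby_{k+1})$, which is nonnegative by the saddle-point property of Definition~\ref{def:saddle_point}. Hence
\[
\langle \nabla_\bbx f(\bbx_{k+1},\bby_{k+1}),\bbx_{k+1}-\bbx^*\rangle - \langle \nabla_\bby f(\bbx_{k+1},\bby_{k+1}),\bby_{k+1}-\bby^*\rangle \geq \tfrac{\mu}{2}\,r_{k+1},
\]
where $\mu = \min\{\mu_x,\mu_y\}$.

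Plugging this lower bound back into the inequality for $r_{k+1}$ yields $r_{k+1} \leq r_k - \eta\mu\, r_{k+1}$, which rearranges to the stated bound $r_{k+1}\leq r_k/(1+\eta\mu)$.

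The main obstacle is handling the cross term $f(\bbx_{k+1},\bby^*)-f(\bbx^*,\bby_{k+1})$ that appears when one combines the two first-order inequalities; the crucial observation is that the saddle-point property of $(\bbx^*,\bby^*)$ forces this difference to be nonnegative, so it can simply be discarded rather than bounded explicitly. Structurally, the reason the argument goes through with no stepsize restriction is precisely that the \emph{implicit} PP update evaluates the gradients at the new iterate $(\bbx_{k+1},\bby_{k+1})$, so the strong convexity/concavity inequalities produce a coefficient on $r_{k+1}$ rather than on $r_k$, which is exactly what converts the inequality into a contraction of the form $r_{k+1} \leq r_k/(1+\eta\mu)$.
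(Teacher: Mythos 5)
Your proof is correct: the law-of-cosines identity, the substitution of the implicit update, the two first-order strong convexity/concavity inequalities, and the use of the saddle-point property to discard $f(\bbx_{k+1},\bby^*)-f(\bbx^*,\bby_{k+1})$ all check out, and the algebra $r_{k+1}\leq r_k-\eta\mu\,r_{k+1}$ delivers exactly the claimed contraction. The paper reaches the same conclusion through a slightly different decomposition: it observes that $\bbx_{k+1}$ minimizes the $(\mu_x+1/\eta)$-strongly convex proximal objective $\phi_{\bby_{k+1}}(\bbx)=f(\bbx,\bby_{k+1})+\frac{1}{2\eta}\|\bbx-\bbx_k\|^2$ (and symmetrically for $\bby$), and invokes the inequality $\phi(\bbx^*)-\phi(\bbx_{k+1})\geq\frac{1}{2}(\mu_x+\frac{1}{\eta})\|\bbx^*-\bbx_{k+1}\|^2$ at the minimizer; expanding $\phi$ produces the same $\frac{1}{2\eta}\|\bbx_k-\bbx^*\|^2$ and $\frac{1}{2}(\mu_x+\frac{1}{\eta})\|\bbx_{k+1}-\bbx^*\|^2$ terms that you generate via the expansion of squares, and the cross term is killed by the same saddle-point inequality. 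The two arguments are thus essentially equivalent in content; yours is written in the operator/variational-inequality style (closer to Rockafellar's original monotone-operator treatment) and never needs to introduce the auxiliary functions $\phi$, while the paper's version packages the quadratic bookkeeping inside the strong convexity of the proximal subproblem. Your closing structural remark --- that the implicit evaluation of the gradients at $(\bbx_{k+1},\bby_{k+1})$ is what places the modulus $\mu$ on $r_{k+1}$ rather than $r_k$ and hence permits arbitrary $\eta>0$ --- is exactly the right explanation for why no stepsize restriction appears.
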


%
Theorem~\ref{thm:scvx_sccv_ppm} states that for the general saddle point problem in \eqref{main_prob}, if the function is strongly convex-strongly concave, the iterates generated by the PP method converge linearly to the optimal solution. 



\section{Optimistic Gradient Descent Ascent method}\label{sec:OGD}

In this section, we study the Optimistic Gradient Descent Ascent (OGDA) method for solving saddle point problems. We first show that OGDA can be considered as an approximation of the proximal point method. Then, we use this interpretation to analyze its convergence properties for bilinear and strongly convex-strongly concave settings. The proximal point approximation approach also allows us to generalize the update of OGDA as we discuss in detail in Section~\ref{ogda_generalized}.

\subsection {Convergence rate of the OGDA Method}
The main idea behind the updates of the OGDA method is the addition of a ``negative-momentum" term to the updates which can be clearly seen when we write the iterations as follows:
\begin{align}
\bbx_{k+1} &= \bbx_k - \eta \nabla_{\bbx} f(\bbx_{k},\bby_{k})  - \eta\left(\nabla_{\bbx} f(\bbx_{k},\bby_{k}) - \nabla_{\bbx} f(\bbx_{k-1},\bby_{k-1})\right), \nonumber \\
\bby_{k+1} &= \bby_k + \eta \nabla_{\bby} f(\bbx_{k},\bby_{k}) + \eta\left(\nabla_{\bby} f(\bbx_{k},\bby_{k}) - \nabla_{\bby} f(\bbx_{k-1},\bby_{k-1})\right). \nonumber
\end{align}
The last term in parenthesis for each of the updates can be interpreted as a ``negative-momentum", differentiating the OGDA method from vanilla Gradient Descent Ascent (GDA). 

%
\begin{algorithm}[tb]
\caption{OGDA method for saddle point problems}\label{alg:OGDA} 
\begin{algorithmic}[1] 
{\REQUIRE Stepsize $\eta>0$, vectors $\bbx_{-1},\bby_{-1}, \bbx_0,\bby_0\in \reals^d$ 
 \vspace{0.5mm}
\FOR {$k=1,2,\ldots$}
\vspace{0.5mm}
     \STATE$\displaystyle{\bbx_{k+1} =  \bbx_k - 2\eta \nabla_{\bbx} f(\bbx_{k},\bby_{k})  + \eta \nabla_{\bbx} f(\bbx_{k-1},\bby_{k-1})}$;
   \vspace{0.5mm}
    \STATE$\displaystyle{\bby_{k+1} =  \bby_k + 2\eta \nabla_{\bby} f(\bbx_{k},\bby_{k}) - \eta \nabla_{\bby} f(\bbx_{k-1},\bby_{k-1})}$;
     \vspace{0.5mm}
\ENDFOR}
\end{algorithmic}\end{algorithm}

%

We analyze the OGDA method as an approximation of the Proximal Point (PP) method presented in Section \ref{sec:prox_point_method}. We first focus on the bilinear case (Assumption \ref{ass:bili}) for which the OGDA updates are
\begin{align}
\bbx_{k+1} =  \bbx_k - 2 \eta \bbB \bby_k  + \eta \bbB \bby_{k-1}, \qquad 
\bby_{k+1} =  \bbx_k + 2 \eta \bbB^{\top} \bbx_k  + \eta \bbB^{\top} \bbx_{k-1} .\nonumber 
\end{align}
Note that the update of the PP method for the variable $\bbx$ in the considered bilinear problem is 
\begin{align}
\bbx_{k+1} &= (I + \eta^2 \bbB \bbB^{\top})^{-1} (\bbx_k - \eta \bbB \bby_k) \nonumber \\
&= (I - \eta^2 \bbB \bbB^{\top} + o(\eta^2)) (\bbx_k - \eta \bbB \bby_k) \nonumber \\
& = \bbx_k - \eta \bbB \bby_k - \eta \bbB (\eta \bbB^{\top} \bbx_k - \eta^2 \bbB^{\top} \bbB \bby_k) + o(\eta^2),\nonumber
\end{align}
where we used the fact that $I - \eta^2 \bbB \bbB^{\top}$ is an approximation of $(I + \eta^2 \bbB \bbB^{\top})^{-1} $ with an error of $o(\eta^2)$. Regrouping the terms and using the updates of the PP method yield 
\begin{align}
\bbx_{k+1} &= \bbx_k- 2 \eta \bbB \bby_k
- \eta \bbB (\eta \bbB^{\top} \bbx_k - (1 + \eta^2 \bbB^{\top} \bbB)\bby_k) + o(\eta^2)\nonumber \\
&= \bbx_k - 2 \eta \bbB \bby_k  - \eta \bbB(\eta \bbB^{\top} \bbx_k - \bby_{k-1} - \eta \bbB^{\top} \bbx_{k-1}) + o(\eta^2)\nonumber \\
&  = \bbx_k - 2 \eta \bbB \bby_k  + \eta \bbB \bby_{k-1}  + o(\eta^2) \nonumber,
\end{align} 
where the last expression is the OGDA update for variable $\bbx$ plus an additional error of $o(\eta^2)$. A similar derivation can be done for the update of variable $\bby$ to show that OGDA is an approximation of the PP method up to $o(\eta^2)$. In the following proposition, we show that this observation can be generalized for any general smooth (possibly nonconvex) function $f(\bbx,\bby)$.

%
\begin{proposition}\label{prop:OGDA_Approx}
Consider the saddle point problem in \eqref{main_prob}. Given a point $(\bbx_k, \bby_k)$, let $(\hat{\bbx}_{k+1}, \hat{\bby}_{k+1})$ be the point we obtain by performing the PP update on $(\bbx_k, \bby_k)$, and let $(\bbx_{k+1}, \bby_{k+1})$   be the point we obtain by performing the OGDA update on $(\bbx_k, \bby_k)$.
Then, for a given stepsize $\eta>0$ we have
\begin{align}
\| \bbx_{k+1} - \hat{\bbx}_{k+1} \|\leq o(\eta^2), \qquad   \| \bby_{k+1} - \hat{\bby}_{k+1} \| \leq o(\eta^2).
\end{align}
\end{proposition}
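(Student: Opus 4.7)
The plan is to Taylor-expand both the proximal point and the OGDA updates around the common point $(\bbx_k,\bby_k)$ and show that the resulting explicit expressions for $\hat{\bbx}_{k+1}$ and $\bbx_{k+1}$ (and similarly for the $\bby$ component) agree through order $\eta^2$, so that their difference is $o(\eta^2)$. Throughout, I denote $\bbg_\bbx := \nabla_\bbx f(\bbx_k,\bby_k)$, $\bbg_\bby := \nabla_\bby f(\bbx_k,\bby_k)$, and let $H_{\bbx\bbx}, H_{\bbx\bby}, H_{\bby\bbx}, H_{\bby\bby}$ denote the corresponding blocks of the Hessian at $(\bbx_k,\bby_k)$.

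First I would handle the (implicit) PP step. Writing $\hat{\bbx}_{k+1} - \bbx_k = -\eta\, \nabla_\bbx f(\hat{\bbx}_{k+1},\hat{\bby}_{k+1})$ and using that by continuity of $\nabla f$ the PP displacements satisfy $\|\hat{\bbx}_{k+1}-\bbx_k\|, \|\hat{\bby}_{k+1}-\bby_k\|=O(\eta)$, I would Taylor expand $\nabla_\bbx f(\hat{\bbx}_{k+1},\hat{\bby}_{k+1})$ about $(\bbx_k,\bby_k)$. Substituting the leading-order displacements $\hat{\bbx}_{k+1}-\bbx_k=-\eta\,\bbg_\bbx+O(\eta^2)$ and $\hat{\bby}_{k+1}-\bby_k=\eta\,\bbg_\bby+O(\eta^2)$ into the expansion yields
$$\hat{\bbx}_{k+1} \;=\; \bbx_k \;-\; \eta\,\bbg_\bbx \;+\; \eta^2 H_{\bbx\bbx}\bbg_\bbx \;-\; \eta^2 H_{\bbx\bby}\bbg_\bby \;+\; o(\eta^2).$$

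Next I would process the OGDA step. The OGDA recursion gives $\bbx_{k+1} = \bbx_k - 2\eta\,\bbg_\bbx + \eta\, \nabla_\bbx f(\bbx_{k-1},\bby_{k-1})$. Taylor-expanding the gradient at $(\bbx_{k-1},\bby_{k-1})$ about $(\bbx_k,\bby_k)$ requires the leading-order values of $\bbx_{k-1}-\bbx_k$ and $\bby_{k-1}-\bby_k$. I would obtain these by applying the OGDA recursion one step earlier: the $-2\eta$ and $+\eta$ gradient contributions partially cancel, leaving the effective first-order step $\bbx_k-\bbx_{k-1} = -\eta\,\bbg_\bbx + O(\eta^2)$, so that $\bbx_{k-1}-\bbx_k = \eta\,\bbg_\bbx + O(\eta^2)$ and symmetrically $\bby_{k-1}-\bby_k = -\eta\,\bbg_\bby + O(\eta^2)$. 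Substituting these into the Taylor expansion gives $\nabla_\bbx f(\bbx_{k-1},\bby_{k-1}) = \bbg_\bbx + \eta H_{\bbx\bbx}\bbg_\bbx - \eta H_{\bbx\bby}\bbg_\bby + O(\eta^2)$, which plugged into the OGDA update produces exactly the same right-hand side as for $\hat{\bbx}_{k+1}$ above, up to $o(\eta^2)$. The bound $\|\bbx_{k+1}-\hat{\bbx}_{k+1}\|\leq o(\eta^2)$ follows, and the $\bby$-component is handled by an identical symmetric computation.

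The main obstacle is the smoothness regime in which the Taylor remainder is genuinely $o(\eta^2)$: the paper's Assumption on $f$ stipulates only Lipschitz gradients, which is enough to make the zeroth- and first-order parts of the expansion rigorous but not the $\eta^2$ remainder. I would handle this either by explicitly invoking higher regularity of $f$ (twice continuously differentiable, or $C^2$ with locally Lipschitz Hessian, as is implicit in the bilinear derivation shown just before the proposition), or by recasting the proposition so the $o(\eta^2)$ statement is read as ``the $O(\eta)$ and $\eta^2$-coefficient terms of the two updates coincide''. A secondary technical point is well-posedness of the PP step: since it is implicit, existence and uniqueness of $(\hat\bbx_{k+1},\hat\bby_{k+1})$ near $(\bbx_k,\bby_k)$ for small $\eta$ follows from a standard contraction argument applied to the mapping defining the PP fixed point, which also yields the $O(\eta)$ a priori bound on the displacements used above.
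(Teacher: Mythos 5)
Your proposal is correct and follows essentially the same route as the paper's proof: both arguments rest on second-order Taylor expansions of the implicit proximal-point gradient and of the lagged OGDA gradient about $(\bbx_k,\bby_k)$, combined with the a priori $O(\eta)$ bounds on the one-step displacements, the only cosmetic difference being that you expand both updates to the common normal form $\bbx_k-\eta\,\bbg_\bbx+\eta^2 H_{\bbx\bbx}\bbg_\bbx-\eta^2 H_{\bbx\bby}\bbg_\bby+o(\eta^2)$ whereas the paper massages the PP expansion directly into the OGDA form plus an $o(\eta^2)$ error. Your observation that the $o(\eta^2)$ remainder genuinely requires $C^2$-type regularity beyond the stated Lipschitz-gradient assumptions, and that well-posedness of the implicit step deserves a word, are fair points that the paper leaves implicit.
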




To analyze the convergence of OGDA, we view it as a proximal point algorithm with an additional error term. In the following theorem, we characterize the convergence rate of the OGDA method for the bilinear saddle point problem defined in Assumption \ref{ass:bili}. 

%
\begin{theorem}[Bilinear case]\label{thm:bili}
Consider the saddle point problem in \eqref{main_prob} under Assumption \ref{ass:bili} and the OGDA method outlined in Algorithm~\ref{alg:OGDA}. Further, recall the definition of $r_k$ in \eqref{r_k_definition}. If we set $\eta =(1/{40\sqrt{\lambda_{\max}(\bbB^\top  \bbB)}})$, then the iterates $\{\bbx_k,\bby_k\}_{k\geq 0}$ generated by the OGDA method satisfy 
\begin{equation}
r_{k+1} \leq   \left(1-c \kappa^{-1}\right)^{k} \hat{r}_0, \nonumber
\end{equation}
where $\hat{r}_0 = \max \{ r_2, r_1, r_0 \}$ and $c$ is a positive constant independent of the problem parameters.  
\end{theorem}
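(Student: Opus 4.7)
The plan is to treat OGDA as a perturbation of the proximal point (PP) method and inherit the contraction from Theorem~\ref{thm:bilinear_ppm}. In the bilinear case, the PP step from $(\bbx_k,\bby_k)$ has the closed form
\[
\hat{\bbx}_{k+1} = (I+\eta^2 \bbB\bbB^\top)^{-1}(\bbx_k - \eta \bbB\bby_k), \qquad \hat{\bby}_{k+1} = (I+\eta^2 \bbB^\top\bbB)^{-1}(\bby_k + \eta \bbB^\top\bbx_k),
\]
and Theorem~\ref{thm:bilinear_ppm} gives $\|\hat{\bbx}_{k+1}-\bbx^*\|^2+\|\hat{\bby}_{k+1}-\bby^*\|^2 \le r_k/(1+\eta^2\lambda_{\min}(\bbB^\top\bbB))$. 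Subtracting this closed form from the OGDA update $\bbx_{k+1} = \bbx_k - 2\eta\bbB\bby_k + \eta\bbB\bby_{k-1}$ and using the PP relation $\hat{\bby}_{k+1} = \bby_k + \eta\bbB^\top\hat{\bbx}_{k+1}$ produces the explicit perturbation
\[
\bbx_{k+1} - \hat{\bbx}_{k+1} = \eta^2\bbB\bbB^\top \hat{\bbx}_{k+1} + \eta\bbB(\bby_{k-1}-\bby_k),
\]
and symmetrically for the $\bby$ coordinate. The first summand has magnitude $O(\eta^2\lambda_{\max}(\bbB^\top\bbB))\|\hat{\bbx}_{k+1}\|$ and can be absorbed once the stepsize is small, while the second is the ``negative momentum'' term whose size is tied to the successive-iterate difference.

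Next, I would introduce a Lyapunov function
\[
V_k = r_k + \beta\bigl(\|\bbx_k-\bbx_{k-1}\|^2 + \|\bby_k-\bby_{k-1}\|^2\bigr),
\]
with $\beta>0$ chosen later. Splitting $\bbx_{k+1}-\bbx^* = (\hat{\bbx}_{k+1}-\bbx^*) + (\bbx_{k+1}-\hat{\bbx}_{k+1})$ (and similarly for $\bby$), applying Young's inequality $\|a+b\|^2 \le (1+\gamma)\|a\|^2+(1+\gamma^{-1})\|b\|^2$ with $\gamma = \Theta(\eta^2\lambda_{\min}(\bbB^\top\bbB))$, and substituting the PP contraction yields a bound of the shape
\[
r_{k+1} \le \bigl(1 - c_1\eta^2\lambda_{\min} + c_2\eta^4\lambda_{\max}^2\bigr)r_k + c_3\eta^2\lambda_{\max}\bigl(\|\bbx_k-\bbx_{k-1}\|^2+\|\bby_k-\bby_{k-1}\|^2\bigr).
\]
Separately, directly from the OGDA recursion, $\|\bbx_{k+1}-\bbx_k\|^2+\|\bby_{k+1}-\bby_k\|^2 \le C\eta^2\lambda_{\max}(r_k+r_{k-1})$, so the difference piece of $V_{k+1}$ feeds back cleanly into $r_k,r_{k-1}$. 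With $\eta = 1/(40\sqrt{\lambda_{\max}})$, the quantity $\eta^2\lambda_{\max}$ is a small absolute constant and $\eta^4\lambda_{\max}^2$ is smaller still, so $c_2\eta^4\lambda_{\max}^2$ cannot undo the $c_1\eta^2\lambda_{\min}$ term. Tuning $\beta = \Theta(\kappa)$ so that $\beta\cdot\eta^2\lambda_{\max}$ balances $\eta^2\lambda_{\min}$ then produces a one-step inequality $V_{k+1} \le (1-c\kappa^{-1})V_k$ for an absolute $c>0$. Iterating and using $r_k\le V_k$ together with $V_0\le(1+\beta)\hat{r}_0$ delivers the claimed bound (after possibly relabeling the constant so that the prefactor is absorbed into $\hat{r}_0 = \max\{r_0,r_1,r_2\}$).

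The main obstacle will be the parameter bookkeeping. PP alone contracts by a factor $1 - \Theta(\kappa^{-1})$, whereas the crude magnitude of the perturbation is $O(\eta^2\lambda_{\max}) = \Theta(1)$, so a naive triangle inequality wipes out the rate entirely. The key observation is that the dominant part of the perturbation is the successive iterate difference $\bby_{k-1}-\bby_k$, which itself is small and shrinks along with $r_k$ --- this is precisely why the Lyapunov function must carry the difference norms. Juggling the Young parameter $\gamma$, the Lyapunov weight $\beta$, and the stepsize so that the PP contraction survives both perturbation pieces is where the technical care concentrates, and it is what forces the particular choice $\eta = 1/(40\sqrt{\lambda_{\max}})$ and the (unspecified but absolute) constant $c$.
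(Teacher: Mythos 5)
Your overall architecture (write OGDA as PP plus a perturbation, convert the momentum term into successive iterate differences, and carry the memory in an auxiliary term) is in the spirit of the paper's proof, but there is a quantitative gap at exactly the point you flag as "the main obstacle," and your proposed resolution does not close it. The perturbation $\bbx_{k+1}-\hat{\bbx}_{k+1}=\eta^2\bbB\bbB^\top\hat{\bbx}_{k+1}+\eta\bbB(\bby_{k-1}-\bby_k)$ is correct, but with $\eta=1/(40\sqrt{\lambda_{\max}(\bbB^\top\bbB)})$ the quantities you rely on being negligible are \emph{absolute constants}, not $o(\kappa^{-1})$: $\eta^4\lambda_{\max}^2=(\eta^2\lambda_{\max})^2=1/1600^2$ and $\eta^2\lambda_{\max}=1/1600$ do not depend on $\kappa$, whereas the PP gain is $\eta^2\lambda_{\min}=1/(1600\kappa)$. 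Hence the claim that "$c_2\eta^4\lambda_{\max}^2$ cannot undo the $c_1\eta^2\lambda_{\min}$ term" is false once $\kappa\gtrsim 1600\,c_2/c_1$, and your one-step bound $r_{k+1}\le(1-c_1\eta^2\lambda_{\min}+c_2\eta^4\lambda_{\max}^2)r_k+\cdots$ has a bracket exceeding $1$ for ill-conditioned $\bbB$. The same problem recurs in the feedback step: $\|\bbx_{k+1}-\bbx_k\|^2+\|\bby_{k+1}-\bby_k\|^2\le C\eta^2\lambda_{\max}(r_k+r_{k-1})$ is again an absolute-constant multiple of $r$, so weighting it by $\beta=\Theta(\kappa)$ in the Lyapunov function reintroduces a $\Theta(\kappa)r_k$ term. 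No choice of scalar Young parameter $\gamma$ and scalar weight $\beta$ can bridge a gap of size $\kappa$ between the perturbation and the contraction; the argument as written proves at best a rate of the form $1-c\kappa^{-1}+C$ with $C$ an absolute constant.

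The missing idea, which is how the paper's proof survives, is that the perturbation must not be reduced to a Euclidean norm: it must be kept as a quadratic form aligned with $\bbB\bbB^\top$. After unrolling the OGDA recursion twice, the paper writes the entire error as $\bbE_x(\bbx_k-\eta\bbB\bby_k)-\eta^3\bbB\bbB^\top\bbB(\bby_k+2\bby_{k-1}-3\bby_{k-2}+\bby_{k-3})$ with $\|\bbE_x\|=O(\eta^4\lambda_{\max}^2)$, and then bounds every resulting term in the expansion of $r_{k+1}$ by a matrix inequality of the form $M\preceq \mathrm{const}\cdot\eta^2\bbB\bbB^\top$ (for instance $20\eta^3(\bbB\bbB^\top)^{3/2}\bbQ_x\preceq 20\eta\sqrt{\lambda_{\max}}\cdot\eta^2\bbB\bbB^\top=\tfrac12\eta^2\bbB\bbB^\top$). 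Compared against $\bbQ_x\preceq\bbI-\tfrac12\eta^2\bbB\bbB^\top$, the contraction then dominates the perturbation \emph{eigendirection by eigendirection}, yielding $\bbI-\tfrac14\eta^2\bbB\bbB^\top\preceq(1-\tfrac14\eta^2\lambda_{\min})\bbI$ and hence the $1-c\kappa^{-1}$ factor (against $\max\{r_k,r_{k-1},r_{k-2},r_{k-3}\}$ rather than a Lyapunov function). To repair your proof you would need either to carry out the estimates in the eigenbasis of $\bbB$ (equivalently, measure the perturbation in a $\bbB\bbB^\top$-weighted norm) or to adopt the paper's matrix-domination argument; the scalar-norm Lyapunov bookkeeping alone cannot deliver the stated rate.
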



The result in Theorem~\ref{thm:bili} shows linear convergence of OGDA in a bilinear problem of the form $f(\bbx,\bby)=\bbx^\top\bbB\bby$ where  matrix $\bbB$ is square and full rank. It further shows that the overall number of iterations to obtain an $\eps$-accurate solution is of $\mathcal{O}(\kappa \log(1/\eps))$, where $\kappa$ is the problem condition number as defined in Assumption \ref{ass:bili}. We would like to mention that this result is similar to the one shown in \citep{DBLP:journals/corr/abs-1802-06132}, except here we analyze OGDA as an approximation of PP. 

In the following theorem, we again use the proximal point approximation interpretation of OGDA to provide a convergence rate estimate for this algorithm when it is used for solving a general strongly convex-strongly concave saddle point problem.

%
\begin{theorem}[Strongly convex-strongly concave case]\label{thm:scv}
Consider the saddle point problem in \eqref{main_prob} under Assumptions \ref{ass:scsc_1} and \ref{ass:scsc} and the OGDA method outlined in Algorithm~\ref{alg:OGDA}. Further, recall the definition of $r_k$ in \eqref{r_k_definition}. If we set $\eta = (1/(4 L))$, then the iterates $\{\bbx_k,\bby_k\}_{k\geq 0}$ generated by OGDA satisfy 
\begin{align}
r_{k+1} \leq \left( 1 -  c \kappa^{-1} \right) ^{k} \hat{r}_0 \nonumber,
\end{align}
where $\hat{r}_0 = c_1 \kappa^2 r_0$ and $c, c_1$ are positive constant independent of the problem parameters.  
\end{theorem}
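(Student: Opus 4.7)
The plan is to regard OGDA as a first-order approximation of the Proximal Point (PP) method and inherit the PP contraction factor $1/(1+\eta\mu)$ from Theorem~\ref{thm:scvx_sccv_ppm}, paying only a controllable error. Given the OGDA iterate $(\bbx_k,\bby_k)$, I would introduce the auxiliary point $(\hat{\bbx}_{k+1},\hat{\bby}_{k+1})$ produced by a single exact PP step from $(\bbx_k,\bby_k)$. Theorem~\ref{thm:scvx_sccv_ppm} then gives
\begin{equation*}
\|\hat{\bbx}_{k+1}-\bbx^*\|^2+\|\hat{\bby}_{k+1}-\bby^*\|^2 \,\le\, \tfrac{1}{1+\eta\mu}\,r_k,
\end{equation*}
and the triangle inequality combined with this bound yields $\|\hat{\bbx}_{k+1}-\bbx_k\|^2+\|\hat{\bby}_{k+1}-\bby_k\|^2 \le 4r_k$.

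Next, I would quantify the gap between OGDA and PP. Subtracting the implicit PP formula from the OGDA update and regrouping yields the \emph{second-difference} identity
\begin{equation*}
\hat{\bbx}_{k+1}-\bbx_{k+1}=\eta\bigl[\nabla_{\bbx} f(\bbx_k,\bby_k)-\nabla_{\bbx} f(\hat{\bbx}_{k+1},\hat{\bby}_{k+1})\bigr]-\eta\bigl[\nabla_{\bbx} f(\bbx_{k-1},\bby_{k-1})-\nabla_{\bbx} f(\bbx_k,\bby_k)\bigr],
\end{equation*}
and the analogous expression for $\bby$. Applying Assumption~\ref{ass:scsc} to each bracket, together with the preceding bound on $\|(\hat{\bbx}_{k+1},\hat{\bby}_{k+1})-(\bbx_k,\bby_k)\|^2$ and the elementary estimate $\|(\bbx_k,\bby_k)-(\bbx_{k-1},\bby_{k-1})\|^2\le 2(r_k+r_{k-1})$, bounds the squared deviation by $C\eta^2L^2(r_k+r_{k-1})$ for an absolute constant $C$.

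I would then combine these two pieces via Young's inequality. For any $\alpha>0$,
\begin{equation*}
r_{k+1}\le (1+\alpha)\,\tfrac{1}{1+\eta\mu}\,r_k+(1+\alpha^{-1})\,C\eta^2L^2\bigl(r_k+r_{k-1}\bigr).
\end{equation*}
Plugging in $\eta=1/(4L)$ makes $\eta^2L^2=1/16$ and choosing $\alpha=\Theta(\eta\mu)=\Theta(\kappa^{-1})$ balances the two contributions so that the coefficient of $r_k$ becomes $1-\Theta(\kappa^{-1})$ while the coefficient of $r_{k-1}$ is $\Theta(\kappa^{-1})$ with a strictly smaller constant. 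The resulting two-step recursion $r_{k+1}\le (1-c_1\kappa^{-1})\,r_k+c_2\kappa^{-1}\,r_{k-1}$ with $c_1>c_2>0$ is standard to convert into a one-step contraction: defining a Lyapunov function $V_k=r_k+\beta r_{k-1}$ with $\beta=\Theta(\kappa^{-1})$ chosen to equalize the two inequalities gives $V_{k+1}\le(1-c\kappa^{-1})V_k$, and unrolling yields $r_k\le V_k\le (1-c\kappa^{-1})^{k-1}V_1$. The factor $\hat{r}_0=c_1\kappa^2 r_0$ stated in the theorem absorbs the relation between $V_1$ and $r_0$, which carries a $\kappa^2$ blow-up coming from the explicit first iterate.

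The main obstacle will be the error analysis of the second paragraph: the implicit iterate $\hat{\bbx}_{k+1}$ appears inside the gradient-difference identity, so one must either use PP non-expansiveness to convert $\|\hat{\bbx}_{k+1}-\bbx_k\|$ back into a quantity governed by $r_k$, or move a small multiple of $r_{k+1}$ to the left-hand side and reabsorb it. The delicate point is guaranteeing that the error, which scales as $\eta^2 L^2$ times previous distances, is strictly dominated by the PP contraction factor $\eta\mu=\Theta(\kappa^{-1})$; the specific stepsize $\eta=1/(4L)$ is what makes the final contraction factor $1-\Theta(\kappa^{-1})$ rather than a weaker $1-\Theta(\kappa^{-2})$.
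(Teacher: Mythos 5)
There is a genuine gap at the heart of your argument, in the step where you absorb the OGDA--PP deviation via Young's inequality. Write the deviation as you do: $\hat{\bbz}_{k+1}-\bbz_{k+1}=\eta\bigl[F(\bbz_k)-F(\hat{\bbz}_{k+1})\bigr]-\eta\bigl[F(\bbz_{k-1})-F(\bbz_k)\bigr]$ with $\bbz=[\bbx;\bby]$ and $F$ the gradient field. The second bracket alone has squared norm up to $\eta^2L^2\|\bbz_k-\bbz_{k-1}\|^2\le 2\eta^2L^2(r_k+r_{k-1})$, and with $\eta=1/(4L)$ this is a \emph{constant fraction} of $r_k+r_{k-1}$ (your own bound $C\eta^2L^2(r_k+r_{k-1})=\tfrac{C}{16}(r_k+r_{k-1})$ says exactly this). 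Meanwhile the PP contraction only gains $\tfrac{\eta\mu}{1+\eta\mu}r_k=\Theta(\kappa^{-1})r_k$. In your Young's inequality $r_{k+1}\le(1+\alpha)\tfrac{1}{1+\eta\mu}r_k+(1+\alpha^{-1})C\eta^2L^2(r_k+r_{k-1})$, keeping the first coefficient below $1-c\kappa^{-1}$ forces $\alpha=O(\kappa^{-1})$, whence $(1+\alpha^{-1})=\Omega(\kappa)$ and the error contribution becomes $\Omega(\kappa)(r_k+r_{k-1})$, not $\Theta(\kappa^{-1})r_{k-1}$ as you claim. The resulting recursion explodes for any $\kappa$ larger than an absolute constant; no choice of $\alpha$ balances a constant-size error against a $\Theta(\kappa^{-1})$ gain. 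The difficulty you flag as "the main obstacle" in your last paragraph is in fact fatal to this route: once you pass to norms and triangle/Young inequalities you discard the negative quadratic terms $-\|\bbz_{k+1}-\bbz_k\|^2$ and the monotonicity cross terms that are the only things capable of absorbing an error of this size.

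The paper's proof is structured differently precisely to avoid this. It writes OGDA as $\bbz_{k+1}=\bbz_k-\eta F(\bbz_{k+1})+\bbvarepsilon_k$ but never bounds $\|\bbvarepsilon_k\|$ against $r_k$. Instead it (i) proves, via a Lyapunov function on the shifted points $\bbz_k-\eta(F(\bbz_k)-F(\bbz_{k-1}))$ (following Theorem 1 of \cite{gidel2018variational}) and keeping the inner products $\langle F(\bbz_k),\bbz_k-\bbz^*\rangle\ge\mu\|\bbz_k-\bbz^*\|^2$ intact, that the consecutive differences satisfy $\|\bbz_k-\bbz_{k-1}\|^2\le 16(1-\eta\mu)^k\|\bbz_0-\bbz^*\|^2$; and then (ii) converts this into a bound on $r_{k+1}$ through strong monotonicity, $\mu^2\|\bbz_{k+1}-\bbz^*\|^2\le\|F(\bbz_{k+1})\|^2\le O(L^2)\max\{\|\bbz_{k+1}-\bbz_k\|^2,\|\bbz_k-\bbz_{k-1}\|^2\}$, which is where the $\kappa^2$ in $\hat{r}_0$ actually comes from (your attribution of the $\kappa^2$ to the $V_1$-versus-$r_0$ relation does not match either argument). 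To repair your proof you would essentially have to import step (i), i.e., first establish linear decay of $\|\bbz_k-\bbz_{k-1}\|^2$; the direct one-step comparison to PP cannot be made to contract.
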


%

The result in Theorem~\ref{thm:scv} shows that OGDA converges linearly to the optimal solution under the assumptions that $f$ is smooth and strongly convex-strongly concave. In other words, it shows that to achieve a point with error $r_k\leq \eps$, we need to run at most $\mathcal{O}(\kappa \log(1/\eps))$ iterations of OGDA.
This result can be compared with the results in \citep{gidel2018variational}, a recent independent work which derived the OGDA updates as a variant of the EG updates (interpreting OGDA as extrapolation from the past). In this connection, the OGDA iterates are the ``midpoints" 
whereas \citep{gidel2018variational} provides a convergence of the original points (not the OGDA iterates) to an error of $\eps$ in $\mathcal{O}(\kappa \log(1/\eps))$.

\subsection{Generalized OGDA method}\label{ogda_generalized}

The update of OGDA both in theory and practice is only studied for the case that the coefficients of both $\nabla_{\bbx} f(\bbx_{k},\bby_{k})$ and $\nabla_{\bbx} f(\bbx_{k},\bby_{k}) - \nabla_{\bbx} f(\bbx_{k-1},\bby_{k-1})$ are $\eta$. This implies that in the OGDA update at step $k$, the coefficient of the current gradient, i.e., $\nabla_{\bbx} f(\bbx_{k},\bby_{k})$, should be exactly twice the coefficient of the negative of the previous gradient, i.e., $-\nabla_{\bbx} f(\bbx_{k},\bby_{k})$. It has been an open question to see if different stepsizes can be used for these terms. In this section, we generalize OGDA where the coefficients for the gradient descent and the negative momentum terms are not necessary equal to each other. We consider the following OGDA dynamics with general stepsize parameters $\alpha, \beta > 0$:
\begin{align}
\bbx_{k+1} &= \bbx_k - (\alpha +\beta) \nabla_{\bbx} f(\bbx_{k}, \bby_{k}) + \beta \nabla_{\bbx} f(\bbx_{k-1}, \bby_{k-1}), \label{eq:OGDA_g_1} \\
\bby_{k+1} &= \bby_k + (\alpha +\beta) \nabla_{\bby} f(\bbx_{k}, \bby_{k}) - \beta \nabla_{\bby} f(\bbx_{k-1}, \bby_{k-1}). \label{eq:OGDA_g_2}
\end{align}
Note that for $\alpha = \beta$, we recover the original OGDA method. Our goal is to show that  OGDA is convergent even if $\alpha$ and $\beta$ are not equal to each other, as long as their difference is sufficiently small. In the following theorem, we formally state our result for the generalized OGDA method described in \eqref{eq:OGDA_g_1} and \eqref{eq:OGDA_g_2} when the objective function $f$ has a bilinear form of $f(\bbx,\bby)=\bbx^\top\bbB\bby$.


\begin{theorem}[Generalized bilinear case]\label{thm:bili_g}
Consider the saddle point problem in \eqref{main_prob} under Assumption~\ref{ass:bili} and the generalized OGDA method in \eqref{eq:OGDA_g_1}-\eqref{eq:OGDA_g_2}. Further, recall the definition of $r_k$ in~\eqref{r_k_definition}. 
If we set $\alpha = {1}/({40\sqrt{\lambda_{\max}(\bbB^\top  \bbB)}})$ and $\alpha$ and $\beta$ satisfy the conditions $0 < \alpha - K \alpha^2 \leq \beta \leq \alpha$ for some constant $K>0$, then the iterates $\{\bbx_k,\bby_k\}_{k\geq 0}$ generated by the generalized OGDA method satisfy
\begin{equation}
r_{k+1} \leq  \left( 1 -  c \kappa^{-1} \right)^{k} \ \hat{r}_0, \nonumber
\end{equation}
where $\hat{r}_0 = \max \{ r_2, r_1, r_0\}$ and $c$ is a positive constant independent of the problem parameters.

%
\end{theorem}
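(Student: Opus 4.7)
The plan is to extend the proximal-point approximation argument of Theorem~\ref{thm:bili} by viewing the generalized OGDA iterate with parameters $(\alpha,\beta)$ as a perturbation of the standard OGDA iterate with common stepsize $\alpha$. Subtracting the two updates in the bilinear case yields
\[
\bbx_{k+1} - \bbx_{k+1}^{\mathrm{OGDA}(\alpha)} = -(\beta-\alpha)\,\bbB(\bby_k - \bby_{k-1}),
\]
and a symmetric identity for $\bby$. Substituting the generalized $\bby$-recursion shows that $\bby_k - \bby_{k-1}$ is a linear combination (with $O(\alpha)$ coefficients) of $\bbB^\top\bbx_{k-1}$ and $\bbB^\top\bbx_{k-2}$, so under the hypothesis $|\beta-\alpha|\leq K\alpha^2$ this extra perturbation is of order $\alpha^3$ times iterate norms---the same order as the residual $o(\alpha^2)$ already quantified in Proposition~\ref{prop:OGDA_Approx} for standard OGDA. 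Thus the generalized OGDA iterate remains within $o(\alpha^2)$ of the true PP iterate.

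Next, letting $(\hat{\bbx}_{k+1},\hat{\bby}_{k+1})$ denote the PP iterate generated from $(\bbx_k,\bby_k)$, I would decompose
\[
r_{k+1} \leq \bigl(\|\hat{\bbx}_{k+1}-\bbx^*\|^2 + \|\hat{\bby}_{k+1}-\bby^*\|^2\bigr) + E_k,
\]
where $E_k$ collects the cross terms controlled by $\|\bbx_{k+1}-\hat{\bbx}_{k+1}\|$ and $\|\bby_{k+1}-\hat{\bby}_{k+1}\|$ together with $r_k^{1/2}$ factors from the triangle/Young inequalities. Theorem~\ref{thm:bilinear_ppm} bounds the first bracket by $\frac{1}{1+\alpha^2\lambda_{\min}(\bbB^\top\bbB)}\,r_k$, while the perturbation analysis above bounds $E_k$ by a constant times $\alpha^3$ times a weighted sum of $r_k$, $r_{k-1}$, $r_{k-2}$. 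With $\alpha = 1/(40\sqrt{\lambda_{\max}(\bbB^\top\bbB)})$, the PP contraction gain is of order $\kappa^{-1}$, whereas the perturbation is smaller than this gain by an additional factor of $\alpha\sqrt{\lambda_{\max}(\bbB^\top\bbB)}$; for sufficiently small $K$ the PP contraction strictly dominates.

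Since the resulting recursion involves $r_k,r_{k-1},r_{k-2}$, I would close it by introducing the Lyapunov quantity $V_k := \max\{r_k, r_{k-1}, r_{k-2}\}$ and proving inductively $V_{k+1} \leq (1 - c\kappa^{-1})V_k$ for some constant $c>0$ independent of the problem parameters. This would immediately give the stated bound with $\hat{r}_0 = V_2 = \max\{r_0,r_1,r_2\}$.

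The main obstacle is constant book-keeping: one has to verify that the absolute coefficient of the $O(\alpha^3)$ perturbation error, when divided by the PP contraction gain $\alpha^2\lambda_{\min}(\bbB^\top\bbB)$, stays bounded uniformly in $\kappa$. In other words, the ``loss'' from the combined OGDA-approximation-plus-generalized-stepsize error must remain strictly smaller than the ``gain'' from the PP contraction by a factor on the order of $\alpha\sqrt{\lambda_{\max}(\bbB^\top\bbB)}$---which is exactly what the scaling $\alpha \propto 1/\sqrt{\lambda_{\max}(\bbB^\top\bbB)}$ enforces. The constant $K$ in $\beta \geq \alpha - K\alpha^2$ must then be taken small enough to preserve this safety margin, and its choice determines the final constant $c$ in the linear rate.
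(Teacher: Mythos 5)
Your proposal is correct and follows essentially the same route as the paper: the paper likewise writes the generalized update as the proximal-point step with stepsize $\alpha$ plus an explicit error, observes that the only new term relative to the standard OGDA analysis is $\alpha(\alpha-\beta)\bbB\bbB^\top\bbx_k$ (controlled by the hypothesis $\alpha-K\alpha^2\leq\beta\leq\alpha$, i.e.\ of the same $O(\alpha^3)$ order as the existing OGDA-versus-PP error terms), and then closes the recursion exactly as in Theorem~\ref{thm:bili} with a max over recent iterates. The only cosmetic differences are that you factor the comparison through standard OGDA while the paper compares the generalized update directly to PP, that the quantitative $O(\alpha^3)$ error bound must come from the explicit bilinear computation in the proof of Theorem~\ref{thm:bili} rather than the merely asymptotic $o(\eta^2)$ statement of Proposition~\ref{prop:OGDA_Approx}, and that unrolling the update twice introduces $\bby_{k-3}$, so the max-recursion actually reaches back to $r_{k-3}$ --- a bookkeeping detail your Lyapunov function would need to absorb.
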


Theorem~\ref{thm:bili_g} shows that it is not necessary to use a factor of $2$ in the OGDA update to have a linearly convergent method and for a wide range of parameters this result holds. A result similar to Theorem~\ref{thm:bili_g} can be established when $\beta > \alpha$. We do not state the results here due to space limitations. 

\section{Extra-gradient method}\label{sec:EG}

In this section, we study the Extra-gradient (EG) method for solving the general saddle point problem in \eqref{main_prob} and provide linear rates of convergence  for the bilinear and the strongly convex-strongly concave case by interpreting this algorithm as an approximation of the proximal point method. 


The main idea of the EG method is to use the gradient at the current point to find a mid-point, and then use the gradient at that mid-point to find the next iterate. To be more precise, given a stepsize $\eta>0$, the update of EG at step $k$ for solving the saddle point problem in \eqref{main_prob} has two steps. First, we find mid-point iterates $\bbx_{k+1/2} $ and $\bby_{k+1/2} $ by performing a primal-dual gradient update as
\begin{align}
\bbx_{k+1/2}=  \bbx_k - \eta \nabla_{\bbx} f(\bbx_{k},\bby_{k}), \qquad 
\bby_{k+1/2} =  \bby_k + \eta \nabla_{\bby} f(\bbx_{k},\bby_{k}). \nonumber
\end{align}
 Then, the gradients evaluated at the midpoints $\bbx_{k+1/2} $ and $\bby_{k+1/2} $ are used to compute the new iterates  $\bbx_{k+1} $ and $\bby_{k+1} $ by performing the updates
\begin{align}
\bbx_{k+1}  =  \bbx_k - \eta \nabla_{\bbx} f(\bbx_{k+1/2},\bby_{k+1/2}),\qquad
\bby_{k+1} =  \bby_k + \eta \nabla_{\bby} f(\bbx_{k+1/2},\bby_{k+1/2}). \nonumber
\end{align}
 The steps of the EG method for solving saddle point problems are outlined in Algorithm~\ref{alg:EG}.

 Note that in the update of the EG method, as the name suggests, requires evaluation of extra gradients at the midpoints $\bbx_{k+1/2} $ and $\bby_{k+1/2} $ which doubles the computational complexity of EG compared to the vanilla Gradient Descent Ascent (GDA) method. 
 We show next EG approximates the Proximal Point (PP) method more accurately, as compared to the GDA method.
Consider the bilinear saddle point problem defined in Assumption \ref{ass:bili}. By following the update of PP in Section~\ref{sec:prox_point_method} and simplifying the expressions, the PP update for the bilinear problem under Assumption \ref{ass:bili}
 can be written as 
  \begin{align}
\bbx_{k+1} = (\bbI + \eta^2 \bbB \bbB^\top)^{-1} (\bbx_k - \eta \bbB \bby_k), \qquad
\bby_{k+1} = (\bbI + \eta^2 \bbB^\top \bbB)^{-1} (\bby_k + \eta \bbB^\top \bbx_k). \nonumber
\end{align}
   As the computation of the inverse $ (\bbI + \eta^2 \bbB^\top \bbB)^{-1}$ could be costly, one can use $\bbI$ instead with an error of  $o(\eta)$. This approximation retrieves the update of GDA which is known to possibly diverge for bilinear saddle point problems
   (see \cite{DBLP:journals/corr/abs-1711-00141}). If we use the more accurate approximation $ (I + \eta^2 \bbB \bbB^\top)^{-1}\approx (\bbI-\eta^2\bbB \bbB^\top)$ which has an error of $o(\eta^2)$, we obtain 
  \begin{align}
\bbx_{k+1} &= (\bbI - \eta^2 \bbB \bbB^\top + o(\eta^2)) (\bbx_k - \eta \bbB \bby_k)= \bbx_k - \eta \bbB \bby_k -\eta^2 \bbB \bbB^\top \bbx_k +o(\eta^2), \label{eq:extra_bilinear_x} \\
\bby_{k+1} &= (\bbI - \eta^2 \bbB^\top \bbB+o(\eta^2)) (\bby_k + \eta \bbB^\top \bbx_k)= \bby_k + \eta \bbB^\top \bbx_k -\eta^2 \bbB^\top \bbB\bby_k +o(\eta^2). \label{eq:extra_bilinear_y}
\end{align}
If we ignore the extra terms in \eqref{eq:extra_bilinear_x}-\eqref{eq:extra_bilinear_y} which are of $o(\eta^2)$, we recover the update of the EG method for the bilinear saddle point problem defined in Assumption \ref{ass:bili}. Therefore, in the bilinear problem, the EG method can be interpreted as an approximation of the PP method with an error of $o(\eta^2)$. 
In the following proposition, we extend this result and show that for any general smooth (possibly nonconvex) function $f(\bbx,\bby)$, EG is an $o(\eta^2)$ approximation of PP.

%
\begin{algorithm}[tb]
\caption{Extra-gradient method for saddle point problem}\label{alg:EG} 
\begin{algorithmic}[1] 
{\REQUIRE Stepsize $\eta>0$, initial vectors $\bbx_0,\bby_0\in \reals^d$ 
 \vspace{0.5mm}
\FOR {$k=1,2,\ldots$}
\vspace{0.5mm}
   \STATE  Compute $\displaystyle{\bbx_{k+1/2} =  \bbx_k - \eta \nabla_{\bbx} f(\bbx_{k},\bby_{k})}$ and  $\displaystyle{\bby_{k+1/2} =  \bby_k + \eta \nabla_{\bby} f(\bbx_{k},\bby_{k})}$;
   \STATE  Update $\displaystyle{\bbx_{k+1} =  \bbx_k - \eta \nabla_{\bbx} f(\bbx_{k+1/2},\bby_{k+1/2})}$ and $\displaystyle{\bby_{k+1} =  \bby_k + \eta \nabla_{\bby} f(\bbx_{k+1/2},\bby_{k+1/2})}$;
\ENDFOR}
\end{algorithmic}\end{algorithm}

%


%
\begin{proposition}\label{prop:EG_Approx}
Consider the saddle point problem in \eqref{main_prob}. Given a point $(\bbx_k, \bby_k)$, let $(\hat{\bbx}_{k+1}, \hat{\bby}_{k+1})$ be the point we obtain by performing the PP update on $(\bbx_k, \bby_k)$, and let $(\bbx_{k+1}, \bby_{k+1})$   be the point we obtain by performing the EG update on $(\bbx_k, \bby_k)$. Then, for a given stepsize $\eta>0$ we have 
\begin{align}
  \| \bby_{k+1} - \hat{\bby}_{k+1} \| \leq o(\eta^2), \qquad \| \bbx_{k+1} - \hat{\bbx}_{k+1} \|  \leq o(\eta^2).
\end{align}
\end{proposition}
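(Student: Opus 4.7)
The plan is to expand both the EG and PP updates as Taylor series around the base point $(\bbx_k, \bby_k)$ and compare them term-by-term through second order in $\eta$. The key observation is that EG uses the midpoint $(\bbx_{k+1/2}, \bby_{k+1/2})$ as an \emph{explicit} proxy for $(\hat{\bbx}_{k+1}, \hat{\bby}_{k+1})$; since both proxies differ from $(\bbx_k,\bby_k)$ by $-\eta\nabla_\bbx f(\bbx_k,\bby_k)$ and $+\eta\nabla_\bby f(\bbx_k,\bby_k)$ to leading order, the resulting second-order corrections from Taylor expansion will coincide.

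For the EG side, note that by construction
\[
\bbx_{k+1/2} - \bbx_k = -\eta\,\nabla_\bbx f(\bbx_k,\bby_k), \qquad \bby_{k+1/2} - \bby_k = \eta\,\nabla_\bby f(\bbx_k,\bby_k),
\]
both of size $O(\eta)$. Taylor expanding $\nabla_\bbx f$ around $(\bbx_k,\bby_k)$ and substituting yields
\[
\bbx_{k+1} = \bbx_k - \eta\,\nabla_\bbx f(\bbx_k,\bby_k) + \eta^2\!\left[\nabla^2_{\bbx\bbx} f\cdot \nabla_\bbx f - \nabla^2_{\bbx\bby} f\cdot \nabla_\bby f\right]\!(\bbx_k,\bby_k) + o(\eta^2),
\]
with an analogous expression for $\bby_{k+1}$. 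This part is a direct computation once the Hessian smoothness of $f$ is invoked.

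The more delicate step is the PP side, because its update is defined implicitly by
\[
\hat{\bbx}_{k+1} = \bbx_k - \eta\,\nabla_\bbx f(\hat{\bbx}_{k+1},\hat{\bby}_{k+1}), \qquad \hat{\bby}_{k+1} = \bby_k + \eta\,\nabla_\bby f(\hat{\bbx}_{k+1},\hat{\bby}_{k+1}).
\]
First I would show that the fixed-point map on the right-hand side is a contraction for $\eta$ small enough relative to the Lipschitz constant of $\nabla f$, so that $(\hat{\bbx}_{k+1},\hat{\bby}_{k+1})$ is well-defined and satisfies $\|\hat{\bbx}_{k+1}-\bbx_k\|, \|\hat{\bby}_{k+1}-\bby_k\| = O(\eta)$. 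Substituting this crude bound back into the implicit equation and applying Lipschitz continuity of $\nabla f$ gives the sharper first-order expansion $\hat{\bbx}_{k+1} - \bbx_k = -\eta\nabla_\bbx f(\bbx_k,\bby_k) + O(\eta^2)$, and likewise for $\hat{\bby}$. One more bootstrap---Taylor expanding $\nabla_\bbx f(\hat{\bbx}_{k+1},\hat{\bby}_{k+1})$ around $(\bbx_k,\bby_k)$ using this refined estimate---produces exactly the same second-order expression as in the EG case above, up to $o(\eta^2)$.

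Subtracting the two second-order expansions, the $O(1)$, $O(\eta)$, and $O(\eta^2)$ terms cancel, leaving $\|\bbx_{k+1}-\hat{\bbx}_{k+1}\| = o(\eta^2)$, and symmetrically for the $\bby$-coordinate. The main obstacle is the implicit nature of PP: carefully tracking the bootstrap from the $O(\eta)$ estimate on $\hat{\bbx}_{k+1}-\bbx_k$ to the $O(\eta^2)$ estimate, and ensuring that the Taylor remainders truly contribute $o(\eta^2)$ rather than $O(\eta^2)$, which is where an assumption of uniform continuity of the Hessian (or an equivalent regularity) is implicitly needed.
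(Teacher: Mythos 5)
Your proposal follows essentially the same route as the paper: Taylor-expand both the EG update and the implicit PP update to second order around $(\bbx_k,\bby_k)$, observe that the $O(1)$, $O(\eta)$, and $O(\eta^2)$ terms coincide, and conclude the difference is $o(\eta^2)$. Your extra care with the implicit PP step (contraction for well-posedness, then bootstrapping from $\|\hat{\bbx}_{k+1}-\bbx_k\|=O(\eta)$ to the refined expansion) and your remark that Hessian continuity is needed to turn the remainder into $o(\eta^2)$ rather than $O(\eta^2)$ make explicit what the paper only asserts via $\nabla_{\bbx}f(\bbx_{k+1},\bby_{k+1})=\nabla_{\bbx}f(\bbx_k,\bby_k)+\mathcal{O}(\eta)$, but the underlying argument is the same.
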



The next theorem views the EG method as the PP method with an error and properly bounds the error to provide convergence rate estimates for the EG method in the bilinear case. 


%
\begin{theorem}[Bilinear case]\label{thm:EG_bilinear}
Consider the saddle point problem in \eqref{main_prob} under Assumption \ref{ass:bili} and the extra-gradient (EG) method outlined in Algorithm~\ref{alg:EG}. Further, recall the definition of $r_k$ in~\eqref{r_k_definition}. If we set $\eta = {1}/({2\sqrt{2\lambda_{\max}(\bbB^\top  \bbB)}})$, then the iterates $\{\bbx_k,\bby_k\}_{k\geq 0}$ generated by the EG method satisfy 
\begin{equation}\label{thm:EG_bilinear_claim}
r_{k+1} \leq  \left(1-c \kappa^{-1}\right) \ r_k,
\end{equation}
where $c$ is a positive constant independent of the problem parameters.  
\end{theorem}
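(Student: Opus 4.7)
My plan is to view the EG update as a perturbation of the proximal point (PP) iteration in the spirit of Proposition~\ref{prop:EG_Approx}, but in the bilinear case to leverage this perspective through a direct computation of $r_{k+1}$ rather than a black-box application of Theorem~\ref{thm:bilinear_ppm}. The motivation is that PP is known to contract (Theorem~\ref{thm:bilinear_ppm}), and the EG iterate differs from the PP iterate only through the approximation $(\bbI + \eta^2 \bbB\bbB^\top)^{-1} \approx \bbI - \eta^2\bbB\bbB^\top$ that appears in \eqref{eq:extra_bilinear_x}-\eqref{eq:extra_bilinear_y}.

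First, I will write the EG iterates in closed form by substituting the mid-point updates $\bbx_{k+1/2} = \bbx_k - \eta\bbB\bby_k$ and $\bby_{k+1/2} = \bby_k + \eta\bbB^\top\bbx_k$ into the outer step, obtaining $\bbx_{k+1} = \bbx_k - \eta\bbB\bby_k - \eta^2\bbB\bbB^\top\bbx_k$ and $\bby_{k+1} = \bby_k + \eta\bbB^\top\bbx_k - \eta^2\bbB^\top\bbB\bby_k$. Since $(\bbx^*,\bby^*) = (\bb0,\bb0)$, I will then expand $r_{k+1} = \|\bbx_{k+1}\|^2 + \|\bby_{k+1}\|^2$ and observe that the linear-in-$\eta$ cross terms $-2\eta\bbx_k^\top\bbB\bby_k$ and $+2\eta\bby_k^\top\bbB^\top\bbx_k$ cancel exactly, while the cubic cross terms $\pm 2\eta^3 \bbx_k^\top \bbB\bbB^\top\bbB\bby_k$ cancel because they are scalars equal to their own transposes.

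Second, what survives is the clean identity
\begin{equation*}
r_{k+1} = r_k - \eta^2\bigl(\bbx_k^\top \bbB\bbB^\top \bbx_k + \bby_k^\top \bbB^\top\bbB\,\bby_k\bigr) + \eta^4\bigl(\|\bbB\bbB^\top\bbx_k\|^2 + \|\bbB^\top\bbB\,\bby_k\|^2\bigr).
\end{equation*}
Using the operator bound $\|\bbB\bbB^\top\bbv\|^2 = \bbv^\top(\bbB\bbB^\top)^2\bbv \leq \lambda_{\max}(\bbB^\top\bbB)\,\bbv^\top\bbB\bbB^\top\bbv$ (and its analogue for $\bbB^\top\bbB$), I can absorb the $\eta^4$ term into the $\eta^2$ term to get $r_{k+1} \leq r_k - \eta^2(1 - \eta^2\lambda_{\max}(\bbB^\top\bbB))(\bbx_k^\top\bbB\bbB^\top\bbx_k + \bby_k^\top\bbB^\top\bbB\,\bby_k)$. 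The choice $\eta^2 = 1/(8\lambda_{\max}(\bbB^\top\bbB))$ yields $1-\eta^2\lambda_{\max}(\bbB^\top\bbB) = 7/8 > 0$, making the coefficient of the quadratic form strictly positive. Finally I will apply $\bbx_k^\top\bbB\bbB^\top\bbx_k \geq \lambda_{\min}(\bbB^\top\bbB)\|\bbx_k\|^2$ (and similarly for $\bby_k$) to conclude $r_{k+1} \leq (1 - (7/64)\kappa^{-1})\,r_k$, identifying $c = 7/64$.

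The main obstacle I expect is not in the contraction estimate itself but in recognizing and correctly executing the cancellation of cross terms: a naive expansion produces six cross terms of order $\eta$ and $\eta^3$, and one must recognize that $\bbx^\top \bbB\bbB^\top\bbB\bby$ is a scalar equal to $(\bbx^\top\bbB\bbB^\top\bbB\bby)^\top = \bby^\top\bbB^\top\bbB\bbB^\top\bbx$, so that the $\eta^3$ contributions from $\|\bbx_{k+1}\|^2$ and $\|\bby_{k+1}\|^2$ cancel exactly. This cancellation is precisely the algebraic manifestation of the ``PP-like'' nature of EG that Proposition~\ref{prop:EG_Approx} captures, and it is what distinguishes EG from GDA (whose analogous expansion retains a linear-in-$\eta$ term with no sign control, consistent with its known divergence on bilinear problems).
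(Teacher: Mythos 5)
Your proof is correct, and it takes a genuinely different route from the paper's. The paper writes the EG iterate as the proximal point iterate plus explicit error terms, namely $\bbx_{k+1} = (\bbI+\eta^2\bbB\bbB^\top)^{-1}(\bbx_k-\eta\bbB\bby_k) - \eta^3\bbB\bbB^\top\bbB\bby_k + \bbE_x(\bbx_k-\eta\bbB\bby_k)$ with $\bbE_x = \bbI - \eta^2\bbB\bbB^\top - (\bbI+\eta^2\bbB\bbB^\top)^{-1}$, then expands $r_{k+1}$, invokes a commutation lemma ($\bbQ_x\bbB = \bbB\bbQ_y$, $\bbE_x\bbB=\bbB\bbE_y$) to cancel and regroup cross terms, and bounds $\|\bbE_x\|,\|\bbE_y\|$ via a Neumann-series estimate, arriving at the factor $1-\frac{1}{20\kappa}$. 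You instead never introduce the inverse at all: you expand the closed-form EG iterate $\bbx_{k+1}=(\bbI-\eta^2\bbB\bbB^\top)\bbx_k-\eta\bbB\bby_k$ directly and obtain the \emph{exact} identity
\begin{equation*}
r_{k+1} = r_k - \eta^2\bigl(\bbx_k^\top\bbB\bbB^\top\bbx_k + \bby_k^\top\bbB^\top\bbB\bby_k\bigr) + \eta^4\bigl(\|\bbB\bbB^\top\bbx_k\|^2 + \|\bbB^\top\bbB\bby_k\|^2\bigr),
\end{equation*}
where the $\eta$- and $\eta^3$-order cross terms cancel for exactly the reasons you state. The subsequent eigenvalue bounds are all valid (I checked that $\|\bbB\bbB^\top\bbv\|^2\le\lambda_{\max}(\bbB^\top\bbB)\,\bbv^\top\bbB\bbB^\top\bbv$ and that $\eta^2=1/(8\lambda_{\max}(\bbB^\top\bbB))$ gives the factor $7/8$ and hence $c=7/64$). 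What your approach buys is a shorter, self-contained argument with a slightly better explicit constant and no need for the error-matrix machinery; what the paper's approach buys is uniformity with the OGDA analysis (Theorem~\ref{thm:bili}), where the same $\bbE_x,\bbE_y$ decomposition is reused and a purely direct expansion would be far messier, and it makes the ``EG approximates PP'' narrative literal rather than motivational.
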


 The result in Theorem~\ref{thm:EG_bilinear} shows linear convergence of the iterates generated by the EG method for a bilinear problem of the form $f(\bbx,\bby)=\bbx^\top\bbB\bby$ where the matrix $\bbB$ is square and full rank. In other words, we obtain that the overall number of iterations to reach a point satisfying $\|\bbx_{k}\|^2+\|\bby_{k}\|^2\leq \epsilon$ is at most $\mathcal{O}(\kappa \log(1/\epsilon))$ which matches the rate achieved in \cite{tseng_1} for bilinear problems. It is worth mentioning that we obtain this optimal complexity of $\mathcal{O}(\kappa \log(1/\epsilon))$ for EG in bilinear problems by analyzing this algorithm as an approximation of the PP method which differs from the approach used in \citep{tseng_1} that directly analyzes EG using a variational inequality approach. We further would like to add that this result improves the $\mathcal{O}(\kappa^2 \log(1/\epsilon))$ of \citep{DBLP:journals/corr/abs-1802-06132} for EG in bilinear problems.

The following theorem characterizes the convergence rate of the EG method when $f(\bbx, \bby)$ is strongly convex-strongly concave.
%


\begin{theorem}[Strongly convex-strongly concave case]\label{thm:EG_scv}
Consider the saddle point problem in \eqref{main_prob} under Assumptions \ref{ass:scsc_1} and \ref{ass:scsc} and the extra-gradient (EG) method outlined in Algorithm~\ref{alg:EG}. Further, recall the definition of the condition number $\kappa$ in Remark~\ref{cond_numb_remark} and the definition of $r_k$ in~\eqref{r_k_definition}. If we set $\eta = {1}/({4L})$, 
then the iterates $\{\bbx_k,\bby_k\}_{k\geq 0}$ generated by the EG method satisfy 
\begin{align}
r_{k+1} \leq \left(1- c \kappa^{-1} \right) r_k \nonumber,
\end{align}
where $c$ is a positive constant independent of the problem parameters.  
\end{theorem}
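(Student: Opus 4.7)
My plan is to follow the paper's theme of treating EG as an approximation of the proximal point (PP) iteration, but since Proposition \ref{prop:EG_Approx} provides only a qualitative $o(\eta^{2})$ error bound, which is not sharp enough at $\eta=1/(4L)$ when compared against the PP contraction rate of order $\Theta(\eta\mu)$, I will carry out a direct energy-function argument whose structure is nevertheless dictated by the PP viewpoint: the midpoint gradient $F(\bbz_{k+1/2})$ serves as a stand-in for the implicit $F(\hat\bbz_{k+1})$ that PP would use. Write $\bbz=(\bbx,\bby)$ and $F(\bbz)=(\nabla_\bbx f(\bbz),-\nabla_\bby f(\bbz))$, so the EG recursion reads $\bbz_{k+1/2}=\bbz_k-\eta F(\bbz_k)$ and $\bbz_{k+1}=\bbz_k-\eta F(\bbz_{k+1/2})$, and the saddle point satisfies $F(\bbz^{*})=\bbzero$. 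First I would derive two properties of $F$ from Assumptions \ref{ass:scsc_1} and \ref{ass:scsc}: (i) $F$ is $2L$-Lipschitz, obtained by splitting each block gradient across an intermediate point and invoking the four constants $L_x,L_{xy},L_y,L_{yx}$; and (ii) $F$ is $\mu$-strongly monotone, $\langle F(\bbz_1)-F(\bbz_2),\bbz_1-\bbz_2\rangle\geq\mu\|\bbz_1-\bbz_2\|^{2}$, which I would obtain by applying the ordinary monotonicity of the saddle operator of the convex-concave shift $g(\bbx,\bby)=f(\bbx,\bby)-\tfrac{\mu}{2}\|\bbx\|^{2}+\tfrac{\mu}{2}\|\bby\|^{2}$ and then adding back the linear contribution.

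Next, expanding $\|\bbz_{k+1}-\bbz^{*}\|^{2}$ and using $\bbz_{k+1/2}-\bbz_k=-\eta F(\bbz_k)$, completion-of-square produces the key identity
\begin{align*}
\|\bbz_{k+1}-\bbz^{*}\|^{2}=\|\bbz_k-\bbz^{*}\|^{2}-2\eta\langle F(\bbz_{k+1/2}),\bbz_{k+1/2}-\bbz^{*}\rangle+\eta^{2}\|F(\bbz_{k+1/2})-F(\bbz_k)\|^{2}-\|\bbz_{k+1/2}-\bbz_k\|^{2}.
\end{align*}
The last two terms are exactly the ``PP-approximation error'' (they would vanish if $\bbz_{k+1/2}$ were replaced by the implicit PP iterate $\hat\bbz_{k+1}$). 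The $2L$-Lipschitz property together with $\eta=1/(4L)$ implies $\eta^{2}\|F(\bbz_{k+1/2})-F(\bbz_k)\|^{2}\leq(2\eta L)^{2}\|\bbz_{k+1/2}-\bbz_k\|^{2}=\tfrac14\|\bbz_{k+1/2}-\bbz_k\|^{2}$, so the error pair is non-positive and can be dropped.

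To finish, $F(\bbz^{*})=\bbzero$ and $\mu$-strong monotonicity give $\langle F(\bbz_{k+1/2}),\bbz_{k+1/2}-\bbz^{*}\rangle\geq\mu\|\bbz_{k+1/2}-\bbz^{*}\|^{2}$, while a reverse-triangle bound combined with the Lipschitz estimate on $F$ yields $\|\bbz_{k+1/2}-\bbz^{*}\|\geq\|\bbz_k-\bbz^{*}\|-\eta\|F(\bbz_k)\|\geq(1-2\eta L)\|\bbz_k-\bbz^{*}\|=\tfrac12\|\bbz_k-\bbz^{*}\|$. Assembling these estimates produces the one-step contraction $r_{k+1}\leq(1-2\eta\mu\cdot\tfrac14)r_k=(1-1/(8\kappa))r_k$, so one can take $c=1/8$. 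The main obstacle is the $\mu$-strong monotonicity of the saddle operator $F$: Assumption \ref{ass:scsc_1} on its own controls only the $\bbx$- and $\bby$-diagonal blocks of the Jacobian of $F$, so the off-diagonal (bilinear) couplings have to be neutralized by the convex-concave shift trick sketched above; everything beyond that is routine bookkeeping tied to the choice $\eta=1/(4L)$.
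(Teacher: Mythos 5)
Your proof is correct and follows essentially the same route as the paper's: both vectorize the iterates into $\bbz$, expand $\|\bbz_{k+1}-\bbz^*\|^2$, apply $\mu$-strong monotonicity of the operator $F$ at the midpoint $\bbz_{k+1/2}$ (with $F(\bbz^*)=\bbzero$), and use Lipschitz continuity of $F$ together with $\eta=1/(4L)$ to make the midpoint-approximation error term nonpositive. The only differences are cosmetic bookkeeping: you isolate the error via the exact identity containing $\eta^2\|F(\bbz_{k+1/2})-F(\bbz_k)\|^2-\|\bbz_{k+1/2}-\bbz_k\|^2$ and pass from $\|\bbz_{k+1/2}-\bbz^*\|$ to $\|\bbz_k-\bbz^*\|$ with a reverse-triangle bound, whereas the paper splits $\|\bbz_{k+1}-\bbz_k\|^2$ across the midpoint, applies Young's inequality to the cross term, and uses $2\|\bbz_{k+1/2}-\bbz^*\|^2\geq\|\bbz_k-\bbz^*\|^2-2\|\bbz_{k+1/2}-\bbz_k\|^2$, arriving at the same linear rate with a slightly different constant.
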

The result in Theorem~\ref{thm:EG_scv} characterizes a linear convergence rate for the EG algorithm in a general smooth and strongly convex-strongly concave case. Similar to the bilinear case, our proof relies on interpreting EG as an approximation of the PP method. Theorem~\ref{thm:EG_scv} further shows that the computational complexity of EG to achieve an $\epsilon$-suboptimal solution, i.e., $\|\bbx_{k+1}-\bbx^*\|^2+\|\bby_{k+1}-\bby^*\|^2\leq \eps$, is $\mathcal{O}(\kappa\log(1/\epsilon))$, where $\kappa=L/\mu$ is the condition number of the number. Note that this complexity bound can also be obtained from the results in \citep{tseng_1}.
\begin{figure}[t!]
  \centering
\includegraphics[width=0.7\columnwidth]{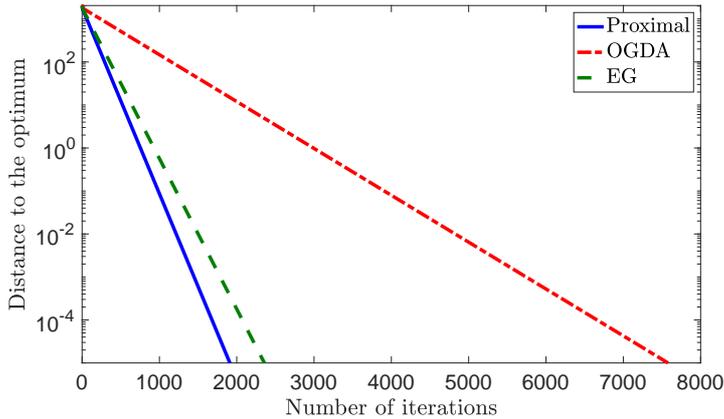}

    \caption{Convergence of proximal point (PP), extra-gradient (EG), and optimistic gradient descent ascent (OGDA) in terms of number of iterations for the bilinear problem in \eqref{exp_bilinear}. All algorithms converge linearly, and the proximal point method has the best performance. Stepsizes of EG and OGDA were tuned for best performance.
}
    
 \label{fig_bili}
 \end{figure}

\section{Numerical Experiments}\label{sec:simul}

In this section, we compare the performance of the Proximal Point~(PP) method with the Extra--Gradient~(EG), Gradient Descent Ascent~(GDA), and Optimistic Gradient Descent Ascent~(OGDA) methods.

We first focus on the following bilinear problem
\begin{align}\label{exp_bilinear}
\min_{\bbx \in \reals^d} \max_{\bby \in \reals^d} \bbx^{\top} \bbB \bby.
\end{align}
where we set $\bbB\in \reals^{d\times d}$ to be a diagonal matrix with a condition number of $100$, and we set the dimension of the problem to $d=10$. The iterates are initialized at $\bbx_0= \mathbf{10}$ and $\bby_0=\mathbf{10}$, where $\mathbf{10}$ is a $d$ dimensional vector with all elements equal to $10$. Figure \ref{fig_bili} demonstrates the errors of PP, OGDA, and EG versus number of iterations for this bilinear problem. Note that in this figure we do not show the error of GDA since it diverges for this problem, as illustrated in Figure 1 (For more details check  \cite{DBLP:journals/corr/abs-1711-00141}). We can observe that all the three considered algorithms converge linearly to the optimal solution $(\bbx^*,\bby^*)=(\bb0,\bb0)$. 

\begin{figure}[t!]
  \centering
\includegraphics[width=0.7\columnwidth]{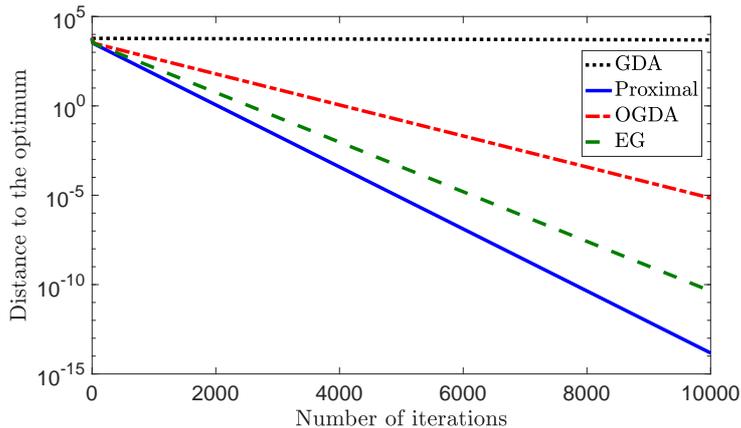}

    \caption{Convergence of proximal point (PP), extra-gradient (EG), optimistic gradient descent ascent (OGDA), and gradient descent ascent (GDA) in terms of number of iterations for the quadratic problem in \eqref{quad_problem}. Stepsizes of EG, OGDA and GDA were tuned for best performance.} 
 \label{fig_quad_iter}
 \end{figure}

We proceed to study the performance of PP, EG, GDA, and OGDA for solving the following  strongly convex-strongly concave saddle point problem
\begin{equation}\label{quad_problem}
\min_{\bbx \in \reals^d} \max_{\bby \in \reals^n}  \frac{1}{n} \left[-\frac{1}{2} \| \bby\|^2 \!-\! \bbb^{\top}\bby\!+\! \bby^{\top} \bbA \bbx \right] + \frac{\lambda}{2} \| \bbx \|^2 
\end{equation}
This is the saddle point reformulation of the linear regression
\begin{align}
\min_{\bbx \in \reals^d} \frac{1}{2n} \| \bbA \bbx - \bbb \|^2
\end{align}
with an $L_2$ regularization, as shown in \cite{DBLP:journals/corr/abs-1802-01504}. As done in \cite{DBLP:journals/corr/abs-1802-01504}, we generate the rows of the matrix $\bbA$ according to a Gaussian distribution $\mathcal{N}(0, I_d)$. Here, we set $d = 50$ and $n = 10$, and assume $\bbb = \bb0$. We also set the regularizer to $\lambda = 1/n$. Figure~\ref{fig_quad_iter} illustrates the distance to the optimal solution of the considered algorithms versus the number of iterations. As we can see, EG and OGDA perform better than GDA and their convergence paths are closer to the one for PP which has the fastest rate. This observation matches our theoretical claim that EG and OGDA are more accurate approximations of PP relative to GDA.

\section{ Conclusions} \label{sec:conclusions}
We consider discrete time gradient based methods for solving convex-concave saddle point problems, with a focus on the Extra-gradient (EG) and the Optimistic Gradient Descent Ascent (OGDA) methods. We show that EG and OGDA can be seen as approximations of the classical Proximal Point (PP) method. We provide linear rate estimates for the bilinear and strongly convex-strongly concave saddle point problems for EG and OGDA as well as their generalizations.

%
%



\bibliography{references}
\bibliographystyle{icml2019}


\newpage

\section{Supplementary Material}\label{sec:SM}

\subsection{Proof of Theorem~\ref{thm:bilinear_ppm}}

First, note that the update of the Proximal Point (PP) method for the bilinear problem (Assumption \ref{ass:bili}) can be written as
\begin{align}
\bbx_{k+1} &= \bbx_k - \eta \bbB \bby_{k+1}, \label{bilinear_prox_proof_100_1}\\
\bby_{k+1} &= \bby_k + \eta \bbB^{\top} \bbx_{k+1}.\label{bilinear_prox_proof_100_2}
\end{align}
We can simplify the above iterations and write them as an explicit algorithm as follows:
\begin{align}
\bbx_{k+1} &= (\bbI + \eta^2 \bbB \bbB^{\top})^{-1} (\bbx_k - \eta \bbB \bby_k), \label{bilinear_prox_proof_200_1}\\
\bby_{k+1} &= (\bbI + \eta^2 \bbB^{\top} \bbB)^{-1} (\bby_k + \eta \bbB^{\top} \bbx_k). \label{bilinear_prox_proof_200_2}
\end{align}
Let us define the symmetric matrices $\bbQ_x = (\bbI + \eta^2 \bbB \bbB^{\top})^{-1}$ and $\bbQ_y = (\bbI + \eta^2 \bbB^{\top} \bbB)^{-1}$. Based on these definitions, and the expressions in \eqref{bilinear_prox_proof_200_1} and \eqref{bilinear_prox_proof_200_2} we can show that the sum $\|\bbx_{k+1}\|^2 + \|\bby_{k+1}\|^2$ can be written as
\begin{align} \label{eq:prox_bilinear_0} 
\|\bbx_{k+1}\|^2 + \|\bby_{k+1}\|^2 = \|\bbQ_x \bbx_k\|^2 + \eta^2\|\bbQ_x \bbB \bby_k\|^2 & + \|\bbQ_y \bby_k\|^2 + \eta^2 \|\bbQ_y \bbB^{\top} \bbx_k\|^2 \nonumber\\ 
&\qquad 
- 2\eta \bbx_k^\top \bbQ_x \bbB\bby_k
+ 2\eta \bby_k^\top \bbQ_y \bbB^\top \bbx_k.
\end{align}
To simplify the expression in \eqref{eq:prox_bilinear_0} we first prove the following lemma which is also useful in the rest of proofs. 


\begin{lemma}
\label{lemma:Matrix_follow}
The matrices $\bbB\in \reals^{d\times d}$, $\bbQ_x = (\bbI + \eta^2 \bbB \bbB^{\top})^{-1}$, and $\bbQ_y = (\bbI + \eta^2 \bbB^{\top} \bbB)^{-1}$ satisfy the following properties:
\begin{align}
\bbQ_x \bbB &= \bbB\bbQ_y , \label{eq:follow_1} \\
\bbQ_y \bbB^{\top} & = \bbB^{\top} \bbQ_x. \label{eq:follow_2}
\end{align} 
\end{lemma}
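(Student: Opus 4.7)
The plan is to establish both identities by exploiting the simple commutation $\bbB(\bbI+\eta^2\bbB^\top\bbB) = (\bbI+\eta^2\bbB\bbB^\top)\bbB$, which is just the distributive law. Concretely, I would first verify this equality by expanding each side to obtain $\bbB + \eta^2 \bbB\bbB^\top\bbB$, which is manifestly the same in both orderings.

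Next, to derive \eqref{eq:follow_1}, I would multiply the identity $\bbB(\bbI+\eta^2\bbB^\top\bbB) = (\bbI+\eta^2\bbB\bbB^\top)\bbB$ on the left by $\bbQ_x = (\bbI+\eta^2\bbB\bbB^\top)^{-1}$ and on the right by $\bbQ_y = (\bbI+\eta^2\bbB^\top\bbB)^{-1}$. The left factor cancels $(\bbI+\eta^2\bbB\bbB^\top)$ on the right-hand side, and the right factor cancels $(\bbI+\eta^2\bbB^\top\bbB)$ on the left-hand side, yielding $\bbQ_x \bbB = \bbB \bbQ_y$ directly.

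For \eqref{eq:follow_2}, the cleanest route is to take the transpose of \eqref{eq:follow_1}. Since $\bbI+\eta^2\bbB\bbB^\top$ and $\bbI+\eta^2\bbB^\top\bbB$ are both symmetric, so are their inverses $\bbQ_x$ and $\bbQ_y$. Transposing $\bbQ_x \bbB = \bbB \bbQ_y$ then gives $\bbB^\top \bbQ_x = \bbQ_y \bbB^\top$, which is exactly \eqref{eq:follow_2}.

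There is no serious obstacle here: the whole lemma rests on the observation that $\bbB\bbB^\top$ acts on the left of $\bbB$ the same way $\bbB^\top\bbB$ acts on the right, which is why the ``push-through'' identity for the inverses holds. Note that invertibility of $\bbI+\eta^2\bbB\bbB^\top$ and $\bbI+\eta^2\bbB^\top\bbB$ is automatic for any $\eta \in \reals$ since these matrices are symmetric positive definite, so no additional assumption on $\eta$ is needed.
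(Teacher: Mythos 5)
Your proof is correct, and it takes a genuinely different route from the paper. The paper proves the lemma by writing the singular value decomposition $\bbB = \bbU\bbLambda\bbV^\top$, expanding $\bbQ_x\bbB$ and $\bbB\bbQ_y$ into the forms $\bbU(\eta^2\bbLambda^2+\bbI)^{-1}\bbLambda\bbV^\top$ and $\bbU\bbLambda(\eta^2\bbLambda^2+\bbI)^{-1}\bbV^\top$, and observing that these agree because diagonal matrices commute; the second identity \eqref{eq:follow_2} is handled by "a similar argument." Your push-through argument --- verify $\bbB(\bbI+\eta^2\bbB^\top\bbB) = (\bbI+\eta^2\bbB\bbB^\top)\bbB$ by distributivity, then multiply by $\bbQ_x$ on the left and $\bbQ_y$ on the right --- is more elementary: it needs no spectral decomposition, it does not use that $\bbB$ is square or full rank (so it would extend verbatim to rectangular $\bbB$, unlike the paper's phrasing which leans on Assumption \ref{ass:bili}), and your derivation of \eqref{eq:follow_2} by transposing \eqref{eq:follow_1} using the symmetry of $\bbQ_x$ and $\bbQ_y$ replaces the paper's repeated computation with a one-line argument. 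What the paper's SVD computation buys in exchange is an explicit diagonalized form of $\bbQ_x\bbB$, which is in the same spirit as the eigenvalue manipulations used later (e.g., the bounds on $\|\bbE_x\|$, $\|\bbE_y\|$ in the proofs of Theorems \ref{thm:bili} and \ref{thm:EG_bilinear}), but nothing in the paper actually requires the intermediate SVD expressions from this lemma, so your shorter route loses nothing. Your closing remark that $\bbI+\eta^2\bbB\bbB^\top$ and $\bbI+\eta^2\bbB^\top\bbB$ are symmetric positive definite for any real $\eta$, hence invertible without further assumptions, is a correct and worthwhile observation that the paper leaves implicit.
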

\begin{proof}
Let $\bbB = \bbU \bbLambda \bbV^{\top}$ be the singular value decomposition of $\bbB$. Here $\bbU$ and $\bbV$ are orthonormal matrices and $\bbLambda$ is a diagonal matrix with the eigenvalues of $\bbB$ as the diagonal entries. Then, we have:
\begin{align}
\bbQ_x \bbB &= (\bbI + \eta^2 \bbU \bbLambda \bbV^{\top} \bbV \bbLambda \bbU^{\top})^{-1} \bbU \bbLambda \bbV^{\top} \nonumber \\
&= (\bbU (\eta^2 \bbLambda^2 + \bbI) \bbU^{\top})^{-1}  \bbU \bbLambda \bbV^{\top} \nonumber \\
&= \bbU (\eta^2\bbLambda^2 + \bbI)^{-1} \bbU^{\top}  \bbU \bbLambda \bbV^{\top} \nonumber \\
&= \bbU (\eta^2\bbLambda^2 + \bbI)^{-1} \bbLambda \bbV^{\top}
\end{align}
Here we used the property that $\bbU^{\top} \bbU = \bbV^{\top} \bbV = \bbI$. Now, we simplify the other side to get:
\begin{align}
\bbB\bbQ_y &= \bbU \bbLambda \bbV^{\top}  (\bbI + \eta^2 \bbV \bbLambda \bbU^{\top} \bbU \bbLambda \bbV^{\top} )^{-1} \nonumber \\
&=  \bbU \bbLambda \bbV^{\top} (\bbV (\eta^2 \bbLambda^2 + \bbI) \bbV^{\top})^{-1} \nonumber \\
&= \bbU \bbLambda \bbV^{\top}  \bbV (\eta^2\bbLambda^2 + \bbI)^{-1} \bbV^{\top} \nonumber \\
&= \bbU \bbLambda (\eta^2\bbLambda^2 + \bbI)^{-1}  \bbV^{\top}
\end{align}
Now, since $ \bbU (\eta^2\bbLambda^2 + \bbI)^{-1} \bbLambda^2 \bbV^{\top} =  \bbU \bbLambda (\eta^2\bbLambda + \bbI)^{-1}  \bbV^{\top}$, the claim in \eqref{eq:follow_1}   follows. Using a similar argument we can also prove the equality in \eqref{eq:follow_2}. 
\end{proof}

Using the result in Lemma~\ref{lemma:Matrix_follow} we can show that 
\begin{align}
\bbx_k^\top \bbQ_x \bbB\bby_k=\bbx_k^\top \bbB\bbQ_y\bby_k=
\bby_k^\top \bbQ_y \bbB^\top\bbx_k,
\end{align}
where the second equality holds as $\bba^\top\bbb=\bbb^\top\bba$. Hence, the expression in \eqref{eq:prox_bilinear_1} can be simplified as
\begin{align}\label{eq:prox_bilinear_1} 
\|\bbx_{k+1}\|^2 + \|\bby_{k+1}\|^2 &= \|\bbQ_x \bbx_k\|^2 + \eta^2\|\bbQ_x \bbB \bby_k\|^2 + \|\bbQ_y \bby_k\|^2 + \eta^2 \|\bbQ_y \bbB^{\top} \bbx_k\|^2.
\end{align}
We simplify equation  \eqref{eq:prox_bilinear_1} as follows. Consider the term involving $\bbx_k$. We have
\begin{align}\label{eq:bi_pox_proof_400}
\|\bbQ_x \bbx_k\|^2 +\eta^2 \|\bbQ_y \bbB^{\top} \bbx_k\|^2 &= \bbx_k^{\top} \bbQ_x^2 \bbx_k + \eta^2 \bbx_k^{\top} \bbB \bbQ_y^2 \bbB^{\top} \bbx_k \nonumber \\
&= \bbx_k^{\top} (\bbQ_x^2 + \eta^2 \bbB \bbQ_y^2 \bbB^{\top}) \bbx_k  
\end{align}
Now we use Lemma \ref{lemma:Matrix_follow} to simplify \eqref{eq:bi_pox_proof_400} as follows
\begin{align}\label{eq:bi_pox_proof_600}
\|\bbQ_x \bbx_k\|^2 +\eta^2 \|\bbQ_y \bbB^{\top} \bbx_k\|^2 
&=  \bbx_k^{\top} (\bbQ_x^2 + \eta^2 \bbB \bbQ_y^2 \bbB^{\top}) \bbx_k \nonumber  \\
&=  \bbx_k^{\top} (\bbQ_x^2 + \eta^2 \bbB \bbQ_y \bbB^{\top}\bbQ_x) \bbx_k \nonumber  \\
&=  \bbx_k^{\top} (\bbQ_x^2 + \eta^2 \bbB  \bbB^{\top}\bbQ_x\bbQ_x) \bbx_k \nonumber  \\
&=  \bbx_k^{\top} (\bbI + \eta^2 \bbB  \bbB^{\top})\bbQ_x^2 \bbx_k \nonumber  \\
&= \bbx_k^{\top} (\bbI + \eta^2 \bbB \bbB^{\top})^{-1} \bbx_k,
\end{align}
where the last equality follows by replacing $\bbQ_x$ by its definition. The same simplification follows for the terms invovling $\bby_k$ which leads to the expression 
\begin{align}\label{eq:bi_pox_proof_700}
 \|\bbQ_y \bby_k\|^2 + \eta^2\|\bbQ_x \bbB \bby_k\|^2 
 =\bby_k^{\top} (\bbI + \eta^2 \bbB^{\top} \bbB)^{-1} \bby_k.
\end{align}
Substitute $\|\bbQ_x \bbx_k\|^2 +\eta^2 \|\bbQ_y \bbB^{\top} \bbx_k\|^2 $ and $ \|\bbQ_y \bby_k\|^2 + \eta^2\|\bbQ_x \bbB \bby_k\|^2 $ in \eqref{eq:prox_bilinear_1}  with the expressions in \eqref{eq:bi_pox_proof_600} and \eqref{eq:bi_pox_proof_700}, respectively, to obtain
\begin{align}\label{eq:prox_bilinear_800} 
\|\bbx_{k+1}\|^2 + \|\bby_{k+1}\|^2 &= \bbx_k^{\top} (\bbI + \eta^2 \bbB \bbB^{\top})^{-1} \bbx_k+\bby_k^{\top} (\bbI + \eta^2 \bbB^{\top} \bbB)^{-1} \bby_k.
\end{align}

 Now, using the expression in \eqref{eq:prox_bilinear_800}  and the fact that $\lambda_{\min}(\bbB^T \bbB) = \lambda_{\min}(\bbB \bbB^T)$ we can write
\begin{align}
\|\bbx_{k+1}\|^2 + \|\bby_{k+1}\|^2 
\leq \bigg( \frac{1}{1 + \eta^2\lambda_{\min}(\bbB^{\top} \bbB)} \bigg) (\|\bbx_k\|^2 + \|\bby_k\|^2) \label{eq:prox_bilinear_2},
\end{align}
and the claim in Theorem \ref{thm:bilinear_ppm} follows. 

\subsection{Proof of Theorem~\ref{thm:scvx_sccv_ppm}}
The update of PP method can be written as 
\begin{align}
\bbx_{k+1} &= \bbx_{k} - \eta \nabla_{\bbx} f (\bbx_{k+1},  \bby_{k+1}), \nonumber \\
\bby_{k+1} &= \bby_k + \eta \nabla_{\bby} f (\bbx_{k+1},  \bby_{k+1}). 
\end{align}
Consider the function $\phi_f:\reals^m \to \reals$ defined as
\begin{align}\label{phi_def}
\phi_{\bby_{k+1}}(\bbx) := f(\bbx,\bby_{k+1}) + \frac{1}{2\eta} \| \bbx - \bbx_k\|^2.
\end{align}
It is easy to check that $\phi_f$ is $\mu_x+ \frac{1}{\eta}$ strongly convex, and it also can be verified that $\bbx_{k+1}=\argmin_{\bbx}\phi_f(\bbx)$. Hence, using strong convexity of $\phi_f$, for any $\bbx\in \reals^m$, we have
\begin{align}
\phi_{\bby_{k+1}}(\bbx) - \phi_{\bby_{k+1}}(\bbx_{k+1}) \geq \frac{1}{2} \bigg( \mu_x + \frac{1}{\eta} \bigg) \|\bbx - \bbx_{k+1} \| ^2,
\end{align}
where we used the fact that $\nabla \phi_{\bby_{k+1}}(\bbx_{k+1}) = \bb0$.
Replace $\phi_{\bby_{k+1}}(\bbx)$ and $ \phi_{\bby_{k+1}}(\bbx_{k+1})$ with their definition in \eqref{phi_def} and further set $\bbx = \bbx^*$ to obtain
\begin{align}
 f(\bbx^*,  \bby_{k+1})  -  f(\bbx_{k+1}, \bby_{k+1})
 &\geq  \frac{1}{2} \bigg( \mu_x + \frac{1}{\eta} \bigg) \|\bbx_{k+1} - \bbx^* \| ^2 - \frac{1}{2\eta} \| \bbx_k - \bbx^*\|^2 + \frac{1}{2\eta} \|\bbx_{k+1} - \bbx_k \|^2 \nonumber \\
&\geq \frac{1}{2} \bigg( \mu_x + \frac{1}{\eta} \bigg) \|\bbx_{k+1} - \bbx^* \| ^2 - \frac{1}{2\eta} \| \bbx_k - \bbx^*\|^2.
\label{eq:Strong_Prox_Primal}
\end{align}
Once again, consider the function:
\begin{align}
\phi_{\bbx_{k+1}} (\bby) = - f(\bbx_{k+1},\bby) + \frac{1}{2\eta} \| \bby - \bby_k\|^2.
\end{align}
It is $\mu_y + \frac{1}{\eta}$ strongly convex and is minimized at $\bby_{k+1}$. Therefore, for any $\bby\in \reals^n$, we have
\begin{align}
\phi_{\bbx_{k+1}} (\bby) - \phi_{\bbx_{k+1}} (\bby_{k+1}) \geq \frac{1}{2} \bigg( \mu_y + \frac{1}{\eta} \bigg) \|\bby - \bby_{k+1} \| ^2
\end{align}
since $\nabla \phi_{\bbx_{k+1}} (\bby_{k+1}) = \bb0$. Replace $\phi_{\bbx_{k+1}} (\bby)$ and $ \phi_{\bbx_{k+1}} (\bby_{k+1})$ with their definitions and further set $\bby = \bby^*$ to obtain
\begin{align}
 f(\bbx_{k+1}, \bby_{k+1}) - f(\bbx_{k+1},  \bby^*)
&\geq  \frac{1}{2} \bigg( \mu_y + \frac{1}{\eta} \bigg) \|\bby_{k+1} - \bby^* \| ^2 - \frac{1}{2\eta} \| \bby_k - \bby^*\|^2 + \frac{1}{2\eta} \|\bby_{k+1} - \bby_k \|^2 \nonumber \\
&\geq \frac{1}{2} \bigg( \mu_y + \frac{1}{\eta} \bigg) \|\bby_{k+1} - \bby^* \| ^2 - \frac{1}{2\eta} \| \bby_k - \bby^*\|^2.
\label{eq:Strong_Prox_Dual}
\end{align}
The saddle point property implies that the optimal solution set $(\bbx^*,\bby^*)$ satisfies the following inequalities for any $\bbx\in \reals^m$ and $ \bby\in \reals^n$:
\begin{align}
f(\bbx^*, \bby) \leq f(\bbx^*, \bby^*) \leq f(\bbx, \bby^*) .
\end{align}
In particular, by setting $(\bbx, \bby) = (\bbx_{k+1}, \bby_{k+1})$ we obtain that
\begin{align}\label{saddle_prop}
f(\bbx^*, \bby_{k+1}) \leq f(\bbx^*, \bby^*) \leq f(\bbx_{k+1}, \bby^*).
\end{align}
Now, considering \eqref{eq:Strong_Prox_Primal}, by adding and subtracting $f(\bbx^*, \bby^*)$ we can write
\begin{align}
&f(\bbx^*,  \bby_{k+1}) -  f(\bbx^*, \bby^*) + f(\bbx^*, \bby^*) - f(\bbx_{k+1}, \bby_{k+1}) 
\nonumber\\
&\quad \geq \frac{1}{2} \bigg( \mu_x + \frac{1}{\eta} \bigg) \|\bbx_{k+1} - \bbx^* \| ^2 - \frac{1}{2\eta} \| \bbx_k - \bbx^*\|^2,
\end{align}
Regroup the terms to obtain
\begin{align}
& f(\bbx^*,  \bby_{k+1}) -  f(\bbx^*, \bby^*) \nonumber\\
&\quad \geq \frac{1}{2} \bigg( \mu_x + \frac{1}{\eta} \bigg) \|\bbx_{k+1} - \bbx^* \| ^2 - \frac{1}{2\eta} \| \bbx_k - \bbx^*\|^2 - f(\bbx^*, \bby^*) + f(\bbx_{k+1}, \bby_{k+1}). 
\end{align}
By using the inequality in \eqref{saddle_prop} we can write
\begin{align}
 \frac{1}{2} \bigg( \mu_x + \frac{1}{\eta} \bigg) \|\bbx_{k+1} - \bbx^* \| ^2 - \frac{1}{2\eta} \| \bbx_k - \bbx^*\|^2 - f(\bbx^*, \bby^*) + f(\bbx_{k+1}, \bby_{k+1}) \leq 0
\label{eq:Strong_Prox_1}
\end{align}

\noindent Similarly, considering \eqref{eq:Strong_Prox_Dual}, we can write
\begin{align}
&f(\bbx_{k+1}, \bby_{k+1}) -f(\bbx^*, \bby^*)  +f(\bbx^*, \bby^*)   - f(\bbx_{k+1},  \bby^*) \nonumber\\
&\quad  \geq \frac{1}{2} \bigg( \mu_y + \frac{1}{\eta} \bigg) \|\bby_{k+1} - \bby^* \| ^2 - \frac{1}{2\eta} \| \bby_k - \bby^*\|^2, 
\end{align}
and, therefore,
\begin{align}
\frac{1}{2} \bigg( \mu_y + \frac{1}{\eta} \bigg) \|\bby_{k+1} - \bby^* \| ^2 - \frac{1}{2\eta} \| \bby_k - \bby^*\|^2 - f(\bbx_{k+1}, \bby_{k+1}) + f(\bbx^*, \bby^*) \leq 0.
\label{eq:Strong_Prox_2}
\end{align}
Add equations \eqref{eq:Strong_Prox_1} and \eqref{eq:Strong_Prox_2}, and use the definition $\mu = \min \{ \mu_x,\mu_y \} $ to obtain
\begin{align}
\frac{1}{2} \bigg( \mu + \frac{1}{\eta} \bigg) \bigg( \|\bbx_{k+1} - \bbx^* \| ^2  + \|\bby_{k+1} - \bby^* \| ^2 \bigg) \leq \frac{1}{2\eta} \bigg( \|\bbx_{k} - \bbx^* \| ^2  + \|\bby_{k} - \bby^* \| ^2 \bigg) .
\end{align}
Regrouping the terms and using the definition $r_k = \|\bbx_{k} - \bbx^* \| ^2  + \|\bby_{k} - \bby^* \| ^2$ leads to
\begin{align}
r_{k+1} &\leq \frac{1}{\eta} \bigg( \mu + \frac{1}{\eta} \bigg)^{-1} r_k \nonumber \\
&= \frac{1}{1 + \eta \mu} r_k,
\end{align}
and the proof is complete.

\subsection{Proof of Proposition~\ref{prop:OGDA_Approx}}

We start from the Proximal Point (PP) dynamics and show that an $\mathcal{O}(\eta^2)$ approximation of this dynamics leads to OGDA. The PP updates are as follows
 \begin{align}
 \bbx_{k+1} &= \bbx_k - \eta \nabla_{\bbx} f \big( \bbx_k - \eta \nabla_{\bbx} f(\bbx_{k+1}, \bby_{k+1}),\bby_k + \eta \nabla_{\bby} f(\bbx_{k+1},\bby_{k+1}) \big) \nonumber \\
  \bbx_{k+1} &= \bbx_k - \eta \nabla_{\bbx} f \big( \bbx_k - \eta \nabla_{\bbx} f(\bbx_{k+1}, \bby_{k+1}),\bby_k + \eta \nabla_{\bby} f(\bbx_{k+1},\bby_{k+1}) \big) \nonumber 
\end{align}

By writing the Taylor's expansion of $\nabla_{\bbx} f$, we obtain
\begin{align}
& \nabla_{\bbx} f \left(\bbx_{k} - \eta \nabla_{\bbx} f(\bbx_{k+1}, \bby_{k+1}), \bby_k + \eta \nabla_{\bby} f(\bbx_{k+1}, \bby_{k+1})\right)\nonumber\\
& =  \nabla_{\bbx} f(\bbx_k ,\bby_k) 
+   \nabla_{\bbx\bbx} f(\bbx_k ,\bby_k) [\bbx_k - \eta \nabla_{\bbx} f(\bbx_{k+1}, \bby_{k+1})-\bbx_k] \nonumber \\
& \qquad \qquad \qquad \qquad \qquad \qquad +   \nabla_{\bbx\bby} f(\bbx_k ,\bby_k) [\bby_k + \eta \nabla_{\bby} f(\bbx_{k+1}, \bby_{k+1})-\bby_k] +o(\eta)\nonumber\\
& =  \nabla_{\bbx} f(\bbx_k ,\bby_k) 
- \eta  \nabla_{\bbx\bbx} f(\bbx_{k} ,\bby_k) \nabla_{\bbx} f(\bbx_{k+1}, \bby_{k+1})
\nonumber \\
& \qquad \qquad \qquad \qquad \qquad \qquad+ \eta  \nabla_{\bbx\bby} f(\bbx_k ,\bby_k) \nabla_{\bby} f(\bbx_{k+1}, \bby_{k+1})+o(\eta).
\end{align}

Using this expression, we have
\begin{align}
\bbx_{k+1} &= \bbx_k  - \eta \nabla_{\bbx} f(\bbx_k ,\bby_k) + \eta^2  \nabla_{\bbx\bbx} f(\bbx_{k} ,\bby_k)  \nabla_{\bbx} f(\bbx_{k+1}, \bby_{k+1}) \nonumber \\
&\quad - \eta^2  \nabla_{\bbx\bby} f(\bbx_k ,\bby_k) \nabla_{\bby} f(\bbx_{k+1}, \bby_{k+1})+o(\eta^2)
\end{align}
On adding and subtracting the term $\eta \nabla_{\bbx} f(\bbx_k ,\bby_k)$, we get
\begin{align}
\bbx_{k+1}&=  \bbx_k - 2\eta \nabla_{\bbx} f(\bbx_k,\bby_k)  + \eta\big( \eta \nabla_{\bbx \bbx} f(\bbx_k,\bby_k) \nabla_{\bbx} f(\bbx_{k+1},\bby_{k+1}) + \nabla_{\bbx} f(\bbx_k,\bby_k)  \nonumber \\
&  \quad   - \eta \nabla_{\bbx \bby}f(\bbx_k,\bby_k) \nabla_{\bby} f(\bbx_{k+1},\bby_{k+1})\big) + o(\eta^2)  \label{eq:OGD_partial}
\end{align}
Note that from the Taylors expansion of $ \nabla_{\bbx \bbx} f, \nabla_{\bbx} f$ and the PP updates, we have
\begin{align}\label{def_errors}
\nabla_{\bbx \bbx} f(\bbx_k,\bby_k) = \nabla_{\bbx \bbx} f(\bbx_{k-1},\bby_{k-1}) + \mathcal{O}(\eta) ,\ \ 
\nabla_{\bbx} f(\bbx_{k+1},\bby_{k+1}) = \nabla_{\bbx} f(\bbx_{k},\bby_{k}) + \mathcal{O}(\eta)
\end{align}
which leads to
\begin{align}
\label{eq:OGD_approx_1}
\eta \nabla_{\bbx \bbx} f(\bbx_k,\bby_k) \nabla_{\bbx} f(\bbx_{k+1},\bby_{k+1}) &= \eta \nabla_{\bbx \bbx} f(\bbx_{k-1},\bby_{k-1}) \nabla_{\bbx} f(\bbx_{k},\bby_{k}) + \mathcal{O}(\eta^2) 
\end{align}
Again, from the Taylor's expansion of $ \nabla_{\bbx \bby} f, \nabla_{\bby} f$ and the PP updates, we have
\begin{align}\label{def_errors}
\nabla_{\bbx \bby} f(\bbx_k,\bby_k) = \nabla_{\bbx \bby} f(\bbx_{k-1},\bby_{k-1}) + \mathcal{O}(\eta) ,\ \ 
\nabla_{\bby} f(\bbx_{k+1},\bby_{k+1}) = \nabla_{\bby} f(\bbx_{k},\bby_{k}) + \mathcal{O}(\eta)
\end{align}
which implies that
\begin{align}
\eta  \nabla_{\bbx \bby} f(\bbx_k,\bby_k) \nabla_{\bby} f(\bbx_{k+1},\bby_{k+1}) &= \eta \nabla_{\bbx \bby} f(\bbx_{k-1},\bby_{k-1}) \nabla_{\bby} f(\bbx_{k},\bby_{k}) + \mathcal{O}(\eta^2) 
\label{eq:OGD_approx_2}
\end{align}
Making the approximations of Equations \eqref{eq:OGD_approx_1} and \eqref{eq:OGD_approx_2} in Equation \eqref{eq:OGD_partial} yields
\begin{align}
\bbx_{k+1} &= \bbx_k - 2\eta \nabla_{\bbx} f(\bbx_k,\bby_k) + \eta\big( \eta \nabla_{\bbx \bbx} f(\bbx_{k-1},\bby_{k-1}) \nabla_{\bbx} f(\bbx_{k},\bby_{k})  + \nabla_{\bbx} f(\bbx_k,\bby_k)  \nonumber \\
&\quad \quad \quad - \eta \nabla_{\bbx \bby}f(\bbx_{k-1},\bby_{k-1}) \nabla_{\bby} f(\bbx_{k},\bby_{k}) + \mathcal{O}(\eta^2) \big) + o(\eta^2) 
\label{eq:OGD_Partial_2}
\end{align}
We also know that
\begin{align}
\nabla_{\bbx}   f(\bbx_{k-1}, \bby_{k-1}) + \mathcal{O}(\eta^2) &= \eta \nabla_{\bbx \bbx} f(\bbx_{k-1},\bby_{k-1})  \nabla_{\bbx} f(\bbx_{k},\bby_{k}) + \nabla_{\bbx} f(\bbx_k,\bby_k)  \nonumber \\
&\quad - \eta \nabla_{\bbx \bby}f(\bbx_{k-1},\bby_{k-1}) \nabla_{\bby} f(\bbx_{k},\bby_{k}) + \mathcal{O}(\eta^2) \nonumber 
\end{align}
Making this substitution back in Equation \eqref{eq:OGD_Partial_2}, we get
\begin{align}
\bbx_{k+1} &= \bbx_k - 2\eta \nabla_{\bbx} f(\bbx_k,\bby_k) + \eta\big(\nabla_{\bbx}  f(\bbx_{k-1}, \bby_{k-1})  + \mathcal{O}(\eta^2)\big) + o(\eta^2) \nonumber \\
&= \bbx_k - 2\eta \nabla_{\bbx} f(\bbx_k,\bby_k)  + \eta\nabla_{\bbx}  f(\bbx_{k-1},\bby_{k-1}) + o(\eta^2)
\end{align}
which is equivalent to the OGDA update plus an additional error term of order $o(\eta^2)$. The same analysis can be done for the dual updates as well to obtain
\begin{align}
\bby_{k+1} 
&= \bby_k + 2\eta \nabla_{\bby} f(\bbx_k,\bby_k)  - \eta\nabla_{\bbx}  f(\bbx_{k-1},\bby_{k-1}) + o(\eta^2).
\end{align}
This shows that the OGDA updates and the PP updates differ by $o(\eta^2)$.

\subsection{Proof of Theorem~\ref{thm:bili}}

We define the following symmetric matrices
\begin{align}
\bbE_x &= \bbI - \eta^2  \bbB \bbB^{\top} - (\bbI + \eta^2 \bbB \bbB^{\top})^{-1}, \nonumber \\
\bbE_y &= \bbI - \eta^2  \bbB^{\top}  \bbB - (\bbI + \eta^2 \bbB^{\top}  \bbB)^{-1}.\nonumber
\end{align}
We rewrite the properties of $\bbE_x$ and  $\bbE_y$ which are
\begin{align}
\|\bbE_x\|, \|\bbE_y\| &\leq  \frac{\eta^4 \lambda_{\max}(\bbB^{\top} \bbB)^2}{1 - \eta^2 \sqrt{\lambda^2_{\max}(\bbB^{\top} \bbB)}} =e \label{eq:equa_error_1}
\end{align}
\begin{align}
\bbE_x \bbB &= \bbB \bbE_y \label{eq:error_flow_11} \\
\bbE_y \bbB^{\top} &= \bbB^{\top} \bbE_x \label{eq:error_flow_22} 
\end{align}

Recall that the update of OGDA for the bilinear problem can be written as
\begin{align}
\bbx_{k+1} &=  \bbx_k - 2 \eta \bbB \bby_k  + \eta \bbB \bby_{k-1}, \nonumber \\
\bby_{k+1} &=  \bbx_k + 2 \eta \bbB^{\top} \bbx_k  + \eta \bbB^{\top} \bbx_{k-1} .\nonumber 
\end{align}
The update for the variable $\bbx$ can be written as an approximate variant of the PP update as follows
\begin{align}
\bbx_{k+1} 
&=(\bbI+\eta^2\bbB\bbB^\top)^{-1} (\bbx_k-\eta\bbB\bby_k) 
\nonumber\\
&\quad -\left[\big( \bbx_k - 2\eta \bbB \bby_k + \eta \bbB \bby_{k-1} \big)  - \big( (\bbI + \eta^2 \bbB \bbB^{\top})^{-1} (\bbx_k - \eta \bbB y_k) \big)\right]\nonumber\\
&= (\bbI+\eta^2\bbB\bbB^\top)^{-1} (\bbx_k-\eta\bbB\bby_k) \nonumber \\
& \quad  -\left[(-\eta \bbB \bby_k + \eta \bbB \bby_{k-1} + \eta^2 \bbB \bbB^{\top} \bbx_k - \eta^3 \bbB \bbB^{\top} \bbB \bby_k) + \bbE_x(\bbx_k - \eta \bbB \bby_k)\right]
\end{align}
Therefore, the error between the OGDA and Proximal updates for the variable $\bbx$ is given by
\begin{align}\label{jafang}
 (-\eta \bbB \bby_k + \eta \bbB \bby_{k-1} + \eta^2 \bbB \bbB^{\top} \bbx_k - \eta^3 \bbB \bbB^{\top} \bbB \bby_k) + \bbE_x(\bbx_k - \eta \bbB \bby_k)
\end{align}
We first derive an upper bound for the term in the first parentheses $(-\eta \bbB \bby_k + \eta \bbB \bby_{k-1} + \eta^2 \bbB \bbB^{\top} \bbx_k - \eta^3 \bbB \bbB^{\top} \bbB \bby_k) $.

Using the OGDA update, we have:
\begin{align}
-\eta \bbB \bby_k + \eta \bbB \bby_{k-1} &= -\eta \bbB (\bby_k - \bby_{k-1} ) \nonumber \\
&= -\eta \bbB ( 2\eta \bbB^{\top} \bbx_{k-1} - \eta \bbB^{\top} \bbx_{k-2} )
\end{align}
Therefore, we can write
\begin{align}
& (-\eta \bbB \bby_k + \eta \bbB \bby_{k-1}  + \eta^2 \bbB \bbB^{\top} \bbx_k - \eta^3 \bbB \bbB^{\top} \bbB \bby_k) \nonumber \\
 &\quad = (-2\eta^2 \bbB \bbB^{\top} \bbx_{k-1} + \eta^2 \bbB \bbB^{\top} \bbx_{k-2} + \eta^2 \bbB \bbB^{\top} \bbx_k - \eta^3 \bbB \bbB^{\top} \bbB \bby_k ) \nonumber \\
&\quad = (\eta^2\bbB \bbB^{\top} \bbx_k  - \eta^2 \bbB \bbB^{\top} \bbx_{k-1}  - \eta^2 \bbB \bbB^{\top} \bbx_{k-1} + \eta^2 \bbB \bbB^{\top} \bbx_{k-2} - \eta^3 \bbB \bbB^{\top} \bbB \bby_k ) \nonumber 
\end{align}
Once again, using the OGDA updates for $(\bbx_k - \bbx_{k-1})$ and $(\bbx_{k-1} - \bbx_{k-2})$, we have
\begin{align}\label{jafang22}
&(\eta^2\bbB \bbB^{\top} \bbx_k   - \eta^2 \bbB \bbB^{\top} \bbx_{k-1}  - \eta^2 \bbB \bbB^{\top} \bbx_{k-1} + \eta^2 \bbB \bbB^{\top} \bbx_{k-2} - \eta^3 \bbB \bbB^{\top} \bbB \bby_k ) \nonumber \\
&\quad = (-2\eta^3 \bbB \bbB^{\top} \bbB \bby_{k-1} + 3\eta^3 \bbB \bbB^{\top} \bbB \bby_{k-2} - \eta^3\bbB \bbB^{\top} \bbB \bby_{k-3} - \eta^3\bbB \bbB^{\top} \bbB \bby_k) \nonumber \\
&\quad = -\eta^3 \bbB \bbB^{\top} \bbB ( \bby_k + 2\bby_{k-1} - 3\bby_{k-2} + \bby_{k-1} )
\end{align}
Therefore, considering the expressions in \eqref{jafang} and \eqref{jafang22} the error between the updates of OGDA and PP for the variable $\bbx$ can be written as
\begin{align}
&\big( \bbx_k - 2\eta \bbB \bby_k + \eta \bbB \bby_{k-1} \big)  -  \big( (\bbI + \eta^2 \bbB \bbB^{\top})^{-1} (\bbx_k - \eta \bbB y_k) \big) \nonumber \\
&\quad =  \bbE_x(\bbx_k - \eta \bbB \bby_k) -\eta^3 \bbB \bbB^{\top} \bbB ( \bby_k + 2\bby_{k-1} - 3\bby_{k-2} + \bby_{k-3} )
\end{align}
We apply the same argument for the update of the variable $\bby$. Combining these results we obtain that the update of OGDA can be written as
\begin{align}
\bbx_{k+1} &= (\bbI  + \eta^2 \bbB \bbB^{\top})^{-1} (\bbx_k - \eta \bbB \bby_k) + \bbE_x(\bbx_k - \eta \bbB \bby_k) 
\nonumber\\
&\quad -\eta^3 \bbB \bbB^{\top} \bbB ( \bby_k + 2\bby_{k-1} - 3\bby_{k-2} + \bby_{k-3} ) \nonumber \\
\bby_{k+1} &= (\bbI  + \eta^2 \bbB^{\top} \bbB)^{-1} (\bby_k + \eta \bbB^{\top} \bbx_k) + \bbE_y(\bby_k + \eta \bbB^{\top} \bbx_k) \nonumber\\
&\quad +\eta^3 \bbB^{\top} \bbB \bbB^{\top} ( \bbx_k + 2\bbx_{k-1} - 3\bbx_{k-2} + \bbx_{k-3} )
\end{align}
As in the proof of Theorem \ref{thm:bilinear_ppm}, we define $\bbQ_x = (\bbI + \eta^2 \bbB \bbB^{\top})^{-1}$ and $\bbQ_y = (\bbI + \eta^2 \bbB^{\top} \bbB)^{-1}$. Then, we can show that 
\begin{align}
\|\bbx_{k+1}\|^2 &\leq (\bbx_k - \eta \bbB \bby_k)^{\top} \bbQ_x^2 (\bbx_k - \eta \bbB \bby_k)  \nonumber \\
& \qquad +  \|\bbE_x(\bbx_k - \eta \bbB \bby_k) 
- \eta^3 \bbB \bbB^{\top} \bbB ( \bby_k + 2\bby_{k-1} - 3\bby_{k-2} + \bby_{k-3} )\|^2 \nonumber \\
& \qquad + 2(\bbx_k - \eta \bbB \bby_k)^{\top} \bbQ_x \big(  \bbE_x(\bbx_k - \eta \bbB \bby_k) 
\nonumber \\
& \qquad -\eta^3 \bbB \bbB^{\top} \bbB ( \bby_k + 2\bby_{k-1} - 3\bby_{k-2} + \bby_{k-3} ) \big) \nonumber \\
\|\bby_{k+1}\|^2 &\leq (\bby_k + \eta \bbB^{\top} \bbx_k)^{\top} \bbQ_y^2 (\bby_k + \eta \bbB^{\top} \bbx_k)  \nonumber \\
& \qquad +  \| \bbE_y (\bby_k + \eta \bbB^{\top} \bbx_k) + \eta^3  \bbB^{\top} \bbB \bbB^{\top}( \bbx_k + 2\bbx_{k-1} - 3\bbx_{k-2} + \bbx_{k-3} )\|^2 \nonumber \\
& \qquad + 2(\bby_k + \eta \bbB^{\top} \bbx_k)^{\top} \bbQ_y \big(  \bbE_y(\bby_k + \eta \bbB^{\top} \bbx_k) 
\nonumber \\
& \qquad + \eta^3  \bbB^{\top} \bbB \bbB^{\top} ( \bbx_k + 2\bbx_{k-1} - 3\bbx_{k-2} + \bbx_{k-3} ) \big)
\end{align}

On summing the two sides, we have:
\begin{align}
&\|\bbx_{k+1}\|^2 + \|\bby_{k+1}\|^2 \nonumber\\
&\leq (\bbx_k - \eta \bbB \bby_k)^{\top} \bbQ_x^2 (\bbx_k - \eta \bbB \bby_k) +  (\bby_k + \eta \bbB^{\top} \bbx_k)^{\top} \bbQ_y^2 (\bby_k + \eta \bbB^{\top} \bbx_k)  \nonumber \\
& +  \|\bbE_x(\bbx_k - \eta \bbB \bby_k) - \eta^3 \bbB \bbB^{\top} \bbB ( \bby_k + 2\bby_{k-1} - 3\bby_{k-2} + \bby_{k-3} )\|^2 \nonumber \\
&+  \| \bbE_y (\bby_k + \eta \bbB^{\top} \bbx_k) + \eta^3  \bbB^{\top} \bbB \bbB^{\top}( \bbx_k + 2\bbx_{k-1} - 3\bbx_{k-2} + \bbx_{k-3} )\|^2 \nonumber \\
& + 2(\bbx_k - \eta \bbB \bby_k)^{\top} \bbQ_x \big(  \bbE_x(\bbx_k - \eta \bbB \bby_k) -\eta^3 \bbB \bbB^{\top} \bbB ( \bby_k + 2\bby_{k-1} - 3\bby_{k-2} + \bby_{k-3} ) \big) \nonumber \\
& + 2(\bby_k + \eta \bbB^{\top} \bbx_k)^{\top} \bbQ_y \big(  \bbE_y(\bby_k + \eta \bbB^{\top} \bbx_k) + \eta^3  \bbB^{\top} \bbB \bbB^{\top} ( \bbx_k + 2\bbx_{k-1} - 3\bbx_{k-2} + \bbx_{k-3} ) \big)
\end{align}
Define $r_k = \|\bbx_k\|^2 + \|\bby_k\|^2$. We have:
\begin{align}
r_{k+1} \leq  \max_{i \in \{ k, k-1, k-2, k-3\} } &\Bigg[ \bbx_i^{\top} (\bbQ_x + 2\bbE_x^2 + 2\eta^2 \bbB \bbB^{\top} \bbE_x^2 + 30\eta^6 (\bbB \bbB^{\top})^3 
+  2\bbQ_x \bbE_x 
\nonumber\\
&\quad  + 2\eta^2 \bbB \bbQ_y \bbE_y \bbB^{\top} - 20\eta^3 \bbB^{\top} \bbB \bbB^{\top} \bbQ_x ) \bbx_i + \bby_i^{\top} (\bbQ_y + 2\bbE_y^2 
\nonumber \\
&\quad + 2\eta^2 \bbB \bbB^{\top} \bbE_y^2 + 30\eta^6 (\bbB \bbB^{\top})^3 +  2\bbQ_y \bbE_y  + 2\eta^2 \bbB \bbQ_x \bbE_x \bbB^{\top} 
\nonumber \\
&\quad- 20\eta^3 \bbB^{\top} \bbB \bbB^{\top} \bbQ_y ) \bby_i \Bigg]
\end{align}
And for $\eta = \frac{1}{40\sqrt{\lambda_{\max} (\bbB^{\top} \bbB)} }$. 
\begin{align}
r_{k+1} &\leq \!\!\max_{i \in \{ k, k-1, k-2, k-3\} } \!\left[ \bbx_i^{\top} (\bbI - \frac{1}{2}\eta^2 \bbB \bbB^{\top}\!\! +\!  \frac{1}{4}\eta^2 \bbB \bbB^{\top}) \bbx_i + \bby_i^{\top} (\bbI - \frac{1}{2}\eta^2 \bbB^{\top} \bbB +  \frac{1}{4}\eta^2 \bbB^{\top} \bbB) \bby_i \right] \nonumber \\
&\leq \left( 1 - \frac{1}{800\kappa} \right) \max \{ r_{k}, r_{k-1}, r_{k-2}, r_{k-3} \}
\end{align}

\subsection{Proof of Theorem~\ref{thm:scv}}

We define $\bbz = [\bbx; \bby]$ and $F(\bbz) = [\nabla_{\bbx} f(\bbx, \bby) ; -\nabla_{\bby} f(\bbx, \bby)]$. Then the OGDA updates can be compactly written as:
\begin{align}
\bbz_{k+1} = \bbz_k - 2\eta F(\bbz_{k}) + \eta F(\bbz_{k-1})
\end{align}
We write the update in terms of the Proximal Point method with an error $\bbvarepsilon_k = \eta(F(\bbz_{k+1}) - 2F(\bbz_{k}) + F(\bbz_{k-1}))$ as follows:
\begin{align}
\bbz_{k+1} = \bbz_k - \eta F(\bbz_{k+1}) + \bbvarepsilon_k
\label{eq:PP_with_error}
\end{align}
We have:
\begin{align}
\| \bbz_{k+1} - \bbz^* \|^2 = \| \bbz_k - \bbz^*\|^2 - \| \bbz_{k+1} - \bbz_{k} \|^2 - 2\eta (F(\bbz_{k+1}) + \bbvarepsilon_k)^{\top} (\bbz_{k+1} - \bbz^*)
\label{eq:OGDA_Main_1}
\end{align}
On rearranging Equation \eqref{eq:PP_with_error} and using the fact that $F(\bbz^*) = 0$, where $\bbz^* = [\bbx^*; \bby^*]$, we get:
\begin{align}
\eta (F(\bbz_{k+1}) - F(\bbz^*)) = \bbz_k - \bbz_{k+1} + \eta(F(\bbz_{k+1}) - F(\bbz_{k})) - \eta (F(\bbz_k) - F(\bbz_{k-1}))
\label{eq:OGDA_eq_1}
\end{align}
On squaring Equation \eqref{eq:OGDA_eq_1} and using Young's inequality, we get:
\begin{align}
\eta^2 \| F(\bbz_{k+1}) - F(\bbz^*) \|^2 \leq 3\| \bbz_{k+1} - \bbz_{k}\|^2 + 3\eta^2 L^2\| \bbz_{k+1} - \bbz_{k} \|^2 + 3\eta^2 L^2\| \bbz_{k} - \bbz_{k-1}\|^2
\end{align}
Now, using strong convexity, and substituting $\eta = 1/4L$, we get:
\begin{align}
\label{eq:sub_back_OGDA}
\frac{\mu^2}{16 L^2} \| \bbz_{k+1} - \bbz^* \|^2 \leq 4 \max \{ \| \bbz_{k+1} - \bbz_{k}\|^2, \| \bbz_{k} - \bbz_{k-1}\|^2  \}
\end{align}
The following part of the proof is inspired by the result of Theorem $1$ of \cite{gidel2018variational}. For OGDA iterates, we have:
\begin{align}
\bbz_k - \eta(F(\bbz_k) - F(\bbz_{k-1})) = \bbz_{k-1} - \eta(F(\bbz_{k-1}) - F(\bbz_{k-2})) - \eta F(\bbz_k)
\label{eq:OGDA_mod_update}
\end{align}
On subtracting $\bbz^*$ from both sides and squaring, we have:
\begin{align}
\| \bbz_k - \eta(F(\bbz_k) & - F(\bbz_{k-1})) - \bbz^* \|^2 \nonumber \\
&= \| \bbz_{k-1} - \eta(F(\bbz_{k-1}) - F(\bbz_{k-2})) - \bbz^* \|^2 + \eta^2 \|F(\bbz_k)\|^2 \nonumber \\
& \qquad \qquad \qquad - 2\eta \langle F(\bbz_k), \bbz_{k-1} - \eta(F(\bbz_{k-1}) - F(\bbz_{k-2})) - \bbz^*\rangle \nonumber \\
&=  \| \bbz_{k-1} - \eta(F(\bbz_{k-1}) - F(\bbz_{k-2})) - \bbz^* \|^2 - 2\eta  \langle F(\bbz_k), \bbz_k - \bbz^* \rangle \nonumber \\
& \qquad \qquad \qquad -2 \langle \eta F(\bbz_k), \eta F(\bbz_{k-1}) \rangle + \eta^2 \|F(\bbz_k)\|^2 \nonumber \\
&= \| \bbz_{k-1} - \eta(F(\bbz_{k-1}) - F(\bbz_{k-2})) - \bbz^* \|^2 - 2\eta  \langle F(\bbz_k), \bbz_k - \bbz^* \rangle \nonumber \\
& \qquad \qquad \qquad +\eta^2 \| F(\bbz_k) - F(\bbz_{k-1}) \|^2 - \eta^2 \|F(\bbz_{k-1})\|^2 \nonumber \\
&\leq \| \bbz_{k-1} - \eta(F(\bbz_{k-1}) - F(\bbz_{k-2})) - \bbz^* \|^2 - 2\eta  \langle F(\bbz_k), \bbz_k - \bbz^* \rangle \nonumber \\
& \qquad \qquad \qquad +\eta^2 L^2 \| \bbz_k -\bbz_{k-1} \|^2 - \eta^2 \|F(\bbz_{k-1})\|^2
\label{ineq:1}
\end{align}
However, since:
\begin{align}
\langle F(\bbz_k) , \bbz_k - \bbz^* \rangle \geq \mu \|  \bbz_k - \bbz^* \|^2
\label{ineq:2}
\end{align}
and using Young's inequality we have
\begin{align}
\label{ineq:3}
\|  \bbz_k - \bbz^* \|^2 \leq \frac{1}{2} \| \bbz_{k-1} - \eta(F(\bbz_{k-1}) - F(\bbz_{k-2})) - \bbz^* \|^2 - \| \eta F(\bbz_{k-1}) \|^2
\end{align}
Substituting Equations \eqref{ineq:2} and \eqref{ineq:3} in Equation \eqref{ineq:1}, we have:
\begin{align}
\eta \mu & \left(\| \bbz_{k-1} - \eta(F(\bbz_{k-1}) - F(\bbz_{k-2})) - \bbz^* \|^2 - 2\eta^2 \| F(\bbz_{k-1}) \|^2  \right) \nonumber \\
& \leq \| \bbz_{k-1} - \eta(F(\bbz_{k-1}) - F(\bbz_{k-2})) - \bbz^* \|^2 - \| \bbz_k - \eta(F(\bbz_k) - F(\bbz_{k-1})) - \bbz^* \|^2 \nonumber \\
& \qquad \qquad +\eta^2 L^2 \| \bbz_k -\bbz_{k-1} \|^2 - \eta^2 \|F(\bbz_{k-1})\|^2
\end{align}
which on rearranging gives:
\begin{align}
\| \bbz_k - \eta(F(\bbz_k) & - F(\bbz_{k-1})) - \bbz^* \|^2 \nonumber \\
& \leq (1 - \eta \mu) \| \bbz_{k-1} - \eta(F(\bbz_{k-1}) - F(\bbz_{k-2})) - \bbz^* \|^2 + \eta^2 L^2 \| \bbz_k -\bbz_{k-1} \|^2 \nonumber \\
&  \qquad \qquad \qquad - \eta^2 (1 - 2 \eta \mu) \|F(\bbz_{k-1})\|^2
\label{eq:OGDA_sub_1}
\end{align}
However, for the OGDA iterates:
\begin{align}
\| \bbz_k - \bbz_{k-1} \|^2 &= \eta^2 \| F(\bbz_{k-1} + F(\bbz_{k-1}) - F(\bbz_{k-2}) \|^2 \nonumber \\
& \leq 2\eta^2 \|F(\bbz_{k-1} \|^2 + 2\eta^2 \| F(\bbz_{k-1}) - F(\bbz_{k-2}) \|^2 \nonumber \\
& \leq 2\eta^2 \|F(\bbz_{k-1}) \|^2 + 2\eta^2 L^2 \| \bbz_{k-1} - \bbz_{k-2} \|^2
\end{align}
which can be written as:
\begin{align}
\| \bbz_k - \bbz_{k-1} \|^2 \leq 4\eta^2 \|F(\bbz_{k-1}) \|^2 + 4\eta^2 L^2 \| \bbz_{k-1} - \bbz_{k-2} \|^2 - \| \bbz_k - \bbz_{k-1} \|^2 
\label{eq:OGDA_sub_2}
\end{align}
Substituting Equation \eqref{eq:OGDA_sub_2} in Equation \eqref{eq:OGDA_sub_1}, we get:
\begin{align}
\| \bbz_k & - \eta(F(\bbz_k)  - F(\bbz_{k-1})) - \bbz^* \|^2 + \eta^2 L^2 \| \bbz_k - \bbz_{k-1} \|^2 \nonumber \\
& \leq (1 - \eta \mu) \| \bbz_{k-1} - \eta(F(\bbz_{k-1}) - F(\bbz_{k-2})) - \bbz^* \|^2 + 4 \eta^4L^4  \| \bbz_{k-1} - \bbz_{k-2} \|^2 \nonumber \\
& \qquad \qquad -\eta^2 (1 - 2\eta \mu - 4 \eta^2 L^2) \| F (\bbz_{k-1})\|^2
\end{align}
For $\eta \leq 1/4L$, we have: $1 - 2\eta \mu - 4 \eta^2 L^2 > 0$ and therefore, can ignore the last term, which gives us (for  $\eta \leq 1/4L$)
\begin{align}
\| \bbz_k & - \eta(F(\bbz_k)  - F(\bbz_{k-1})) - \bbz^* \|^2 + \eta^2 L^2 \| \bbz_k - \bbz_{k-1} \|^2 \nonumber \\
& \leq (1 - \eta \mu) \| \bbz_{k-1} - \eta(F(\bbz_{k-1}) - F(\bbz_{k-2})) - \bbz^* \|^2 + 4 \eta^4L^4  \| \bbz_{k-1} - \bbz_{k-2} \|^2
\end{align}
since $\eta \leq 1/4L$, we have $(1- \eta \mu) \geq 4 \eta^2 L^2$, which gives:
\begin{align}
\| \bbz_k & - \eta(F(\bbz_k)  - F(\bbz_{k-1})) - \bbz^* \|^2 + \eta^2 L^2 \| \bbz_k - \bbz_{k-1} \|^2 \nonumber \\
& \leq (1 - \eta \mu) \left( \| \bbz_{k-1} - \eta(F(\bbz_{k-1}) - F(\bbz_{k-2})) - \bbz^* \|^2 + \eta^2 L^2 \| \bbz_{k-1} - \bbz_{k-2} \|^2 \right)
\end{align}
which gives us:
\begin{align}
\| \bbz_k & - \eta(F(\bbz_k)  - F(\bbz_{k-1})) - \bbz^* \|^2 + \eta^2 L^2 \| \bbz_k - \bbz_{k-1} \|^2 \leq (1 - \eta \mu)^k \left( \| \bbz_{0} - \bbz^* \|^2  \right)
\end{align}
in particular, for $\eta = \frac{1}{4L}$:
\begin{align}
 \| \bbz_k - \bbz_{k-1} \|^2 \leq 16 \left( 1 - \frac{\mu}{4L} \right) ^k \left( \| \bbz_{0} - \bbz^* \|^2  \right)
 \label{eq:final_ineq_OGDA_scsc}
\end{align}
Substituting Equation \eqref{eq:final_ineq_OGDA_scsc} back in Equation \eqref{eq:sub_back_OGDA}, we get:
\begin{align}
\| \bbz_{k+1} - \bbz^* \|^2 \leq \left( 1 - \frac{\mu}{4L} \right) ^k \times (1024 \kappa^2 \| \bbz_{0} - \bbz^* \|^2 )
\end{align}
On defining $\hat{r}_0 = 1024 \kappa^2 \| \bbz_{0} - \bbz^* \|^2$, we have:
\begin{align}
\| \bbz_{k+1} - \bbz^* \|^2 \leq \left( 1 - \frac{\mu}{4L} \right) ^k \hat{r}_0 .
\end{align}

\subsection{Proof of Theorem \ref{thm:bili_g}}

The generalized OGDA method for bilinear problems is given by:
\begin{align}
\bbx_{k+1} &= \bbx_k - (\alpha +\beta) \bbB \bby_k + \beta \bbB \bby_{k-1} \nonumber \\
\bby_{k+1} &= \bby_k + (\alpha +\beta) \bbB \bby_k - \beta \bbB \bby_{k-1} \nonumber 
\end{align}

We compare this with the Proximal Point (PP) method with stepsize $\alpha$. The proof follows long the exact same lines as the proof of Theorem \ref{thm:bili}.

We define the following symmetric matrices
\begin{align}
\bbE_x &= \bbI - \alpha^2  \bbB \bbB^{\top} - (\bbI + \alpha^2 \bbB \bbB^{\top})^{-1}, \nonumber \\
\bbE_y &= \bbI - \alpha^2  \bbB^{\top}  \bbB - (\bbI + \alpha^2 \bbB^{\top}  \bbB)^{-1}.\nonumber
\end{align}
We rewrite the properties of $\bbE_x$ and  $\bbE_y$ which are
\begin{align}
\|\bbE_x\|, \|\bbE_y\| &\leq  \frac{\alpha^4 \lambda_{\max}(\bbB^{\top} \bbB)^2}{1 - \alpha^2 \sqrt{\lambda^2_{\max}(\bbB^{\top} \bbB)}} =e 
\end{align}
\begin{align}
\bbE_x \bbB &= \bbB \bbE_y  \\
\bbE_y \bbB^{\top} &= \bbB^{\top} \bbE_x 
\end{align}

Therefore, the error between the OGDA and Proximal updates for the variable $\bbx$ is given by
\begin{align}\label{jafang_g}
 (-\beta \bbB \bby_k + \beta \bbB \bby_{k-1} + \alpha^2 \bbB \bbB^{\top} \bbx_k - \alpha^3 \bbB \bbB^{\top} \bbB \bby_k) + \bbE_x(\bbx_k - \alpha \bbB \bby_k)
\end{align}
We first derive an upper bound for the term in the first parentheses $(-\beta \bbB \bby_k + \beta \bbB \bby_{k-1} + \alpha^2 \bbB \bbB^{\top} \bbx_k - \alpha^3 \bbB \bbB^{\top} \bbB \bby_k) $.

Using the generalized OGDA update, we have:
\begin{align}
-\beta \bbB \bby_k + \beta \bbB \bby_{k-1} &= -\beta \bbB (\bby_k - \bby_{k-1} ) \nonumber \\
&= -\beta \bbB ( (\alpha + \beta) \bbB^{\top} \bbx_{k-1} - \beta \bbB^{\top} \bbx_{k-2} )
\end{align}
Therefore, we can write
\begin{align}
 (-\beta \bbB \bby_k & + \beta \bbB \bby_{k-1}  + \alpha^2 \bbB \bbB^{\top} \bbx_k - \alpha^3 \bbB \bbB^{\top} \bbB \bby_k) \nonumber \\
&= (\alpha^2\bbB \bbB^{\top} \bbx_k  - \alpha \beta \bbB \bbB^{\top} \bbx_{k-1}  - \beta^2 \bbB \bbB^{\top} \bbx_{k-1} + \beta^2 \bbB \bbB^{\top} \bbx_{k-2} - \alpha^3 \bbB \bbB^{\top} \bbB \bby_k ) \nonumber 
\end{align}
Once again, using the generalized OGDA updates for $(\bbx_k - \bbx_{k-1})$ and $(\bbx_{k-1} - \bbx_{k-2})$, we have
\begin{align}\label{jafang22_g}
(\alpha^2\bbB \bbB^{\top} \bbx_k   &- \alpha \beta \bbB \bbB^{\top} \bbx_{k-1}   - \beta^2 \bbB \bbB^{\top} \bbx_{k-1} + \beta^2 \bbB \bbB^{\top} \bbx_{k-2} - \alpha^3 \bbB \bbB^{\top} \bbB \bby_k ) \nonumber \\
&= \alpha (\alpha - \beta) \bbB \bbB^{\top} \bbx_k - \alpha \beta (\alpha + \beta)  \bbB \bbB^{\top}  \bbB \bby_{k-1}  + \alpha \beta^2 \bbB \bbB^{\top}  \bbB \bby_{k-2}  \nonumber \\
& \qquad + \beta^2 (\alpha + \beta) \bbB \bbB^{\top}  \bbB \bby_{k-2} - \beta^3 \bbB \bbB^{\top}  \bbB \bby_{k-3} - \alpha^3 \bbB \bbB^{\top} \bbB \bby_k \nonumber \\
&= \alpha (\alpha - \beta) \bbB \bbB^{\top} \bbx_k - \alpha \beta (\alpha + \beta)  \bbB \bbB^{\top}  \bbB \bby_{k-1}  + \beta^2 (2\alpha + \beta) \bbB \bbB^{\top}  \bbB \bby_{k-2}  \nonumber \\
& \qquad  - \beta^3 \bbB \bbB^{\top}  \bbB \bby_{k-3} - \alpha^3 \bbB \bbB^{\top} \bbB \bby_k 
\end{align}
Therefore, considering the expressions in \eqref{jafang_g} and \eqref{jafang22_g} the error between the updates of OGDA and PP for the variable $\bbx$ can be written as
\begin{align}
&\big( \bbx_k - (\alpha + \beta) \bbB \bby_k + \beta \bbB \bby_{k-1} \big)  -  \big( (\bbI + \alpha^2 \bbB \bbB^{\top})^{-1} (\bbx_k - \alpha \bbB y_k) \big) \nonumber \\
&=  \bbE_x(\bbx_k - \alpha \bbB \bby_k)  + \alpha (\alpha - \beta) \bbB \bbB^{\top} \bbx_k - \alpha \beta (\alpha + \beta)  \bbB \bbB^{\top}  \bbB \bby_{k-1}   \nonumber \\
& \qquad  + \beta^2 (2\alpha + \beta) \bbB \bbB^{\top}  \bbB \bby_{k-2}  - \beta^3 \bbB \bbB^{\top}  \bbB \bby_{k-3} - \alpha^3 \bbB \bbB^{\top} \bbB \bby_k 
\end{align}

Now, the convergence proof follows along the same lines as the proof of Theorem \ref{thm:bili}. We set $\eta = \max \{ \alpha , \beta \}$, and we need the additional assumption:
\begin{align}
|\alpha - \beta | \leq \mathcal{O} (\eta^3 / \alpha) \label{cond_beta}
\end{align}
due to the presence of the term $ \alpha (\alpha - \beta) \bbB \bbB^{\top} \bbx_k$, Let 
\begin{align}
\alpha - K \alpha^2 \leq \beta \leq \alpha
\end{align}
On making these substitutions, we get the same result as Theorem \ref{thm:bili}.

\subsection{Proof of Proposition~\ref{prop:EG_Approx}}

The Extragradient updates can be written as
\begin{align}
\bbx_{k+1} &= \bbx_k - \eta \nabla_{\bbx} f(\bbx_k - \eta \nabla_{\bbx} f(\bbx_k, \bby_k), \bby_k + \eta \nabla_{\bby} f(\bbx_k, \bby_k)) \nonumber \\
\bby_{k+1} &= \bby_k +  \eta \nabla_{\bby} f(\bbx_k - \eta \nabla_{\bbx} f(\bbx_k, \bby_k),\bby_k + \eta \nabla_{\bby} f(\bbx_k, \bby_k)) \nonumber
\end{align}
By writing the Taylor's expansion of $\nabla_{\bbx} f$ we obtain that 
\begin{align}
& \nabla_{\bbx} f \left(\bbx_k - \eta \nabla_{\bbx} f(\bbx_k, \bby_k), \bby_k + \eta \nabla_{\bby} f(\bbx_k, \bby_k)\right)\nonumber\\
& =  \nabla_{\bbx} f(\bbx_k ,\bby_k) 
+   \nabla_{\bbx\bbx} f(\bbx_k ,\bby_k) [\bbx_k - \eta \nabla_{\bbx} f(\bbx_k, \bby_k)-\bbx_k] \nonumber \\
& \qquad \qquad  \qquad  \qquad  \qquad  \qquad  \qquad  \qquad  +   \nabla_{\bbx\bby} f(\bbx_k ,\bby_k) [\bby_k + \eta \nabla_{\bby} f(\bbx_k, \bby_k)-\bby_k] +o(\eta)\nonumber\\
& =  \nabla_{\bbx} f(\bbx_k ,\bby_k) 
- \eta  \nabla_{\bbx\bbx} f(\bbx_k ,\bby_k) \nabla_{\bbx} f(\bbx_k, \bby_k)
+ \eta  \nabla_{\bbx\bby} f(\bbx_k ,\bby_k) \nabla_{\bby} f(\bbx_k, \bby_k)+o(\eta).
\end{align}
Use this expression to write 
\begin{align}\label{eq:EG__primal_Approx}
\bbx_{k+1} 
&= \bbx_k - \eta \nabla_{\bbx} f(\bbx_k,\bby_k) + \eta^2 \nabla_{\bbx \bbx} f(\bbx_k,\bby_k) \nabla_{\bbx} f(\bbx_k, \bby_k)    \nonumber\\
&\quad   - \eta^2 \nabla_{\bbx \bby}f(\bbx_k,\bby_k) \nabla_{\bby} f(\bbx_{k},\bby_{k}) + o(\eta^2).
\end{align}
By following the same argument for $\bby$ we obtain
\begin{align}
\bby_{k+1} &= \bby_k + \eta \nabla_{\bbx} f(\bbx_k,\bby_k) + \eta^2 \nabla_{\bby \bby} f(\bbx_k,\bby_k) \nabla_{\bby} f(\bbx_{k},\bby_{k})  \nonumber\\
&\quad - \eta^2 \nabla_{\bby \bbx}f(\bbx_k,\bby_k) \nabla_{\bbx} f(\bbx_{k},\bby_{k}) + o(\eta^2) \label{eq:EG__Dual_Approx}
\end{align}

Now we find a second order approximation for the Proximal Point Method. 
Note that the update of the proximal point method for variable $\bbx$ can be written as
\begin{align}
\bbx_{k+1}  &= \bbx_k -\eta  \nabla_{\bbx} f (\bbx_{k+1},\bby_{k+1}) \nonumber\\
&= \bbx_k - \eta \nabla_{\bbx} f \big( \bbx_k - \eta \nabla_{\bbx} f(\bbx_{k+1}, \bby_{k+1}), \bby_k + \eta \nabla_{\bby} f(\bbx_{k+1},\bby_{k+1}) \big) 
\end{align}
where in the second equality we replaced $\bbx_{k+1}$ and $\bby_{k+1}$ in the gradient with their updates. Hence, using Taylor's series we can show that
\begin{align}\label{hassan_1}
\bbx_{k+1}  &= \bbx_k - \eta \nabla_{\bbx} f(\bbx_k,\bby_k) + \eta^2 \nabla_{\bbx \bbx} f(\bbx_k,\bby_k) \nabla_{\bbx} f(\bbx_{k+1},\bby_{k+1}) \nonumber \\
& \quad  - \eta^2 \nabla_{\bbx \bby}f(\bbx_k,\bby_k) \nabla_{\bby} f(\bbx_{k+1},\bby_{k+1}) + o(\eta^2) \nonumber \\
&= \bbx_k - \eta \nabla_{\bbx} f(\bbx_k,\bby_k) + \eta^2 \nabla_{\bbx \bbx} f(\bbx_k,\bby_k) \nabla_{\bbx} f(\bbx_{k},\bby_{k})   \nonumber\\
&\quad   - \eta^2 \nabla_{\bbx \bby}f(\bbx_k,\bby_k) \nabla_{\bby} f(\bbx_{k},\bby_{k}) + o(\eta^2) ,
\end{align}
where in the second equality we used the fact that $\nabla_{\bbx} f(\bbx_{k+1}, \bby_{k+1}) = \nabla_{\bbx} f(\bbx_k, \bby_k) + \mathcal{O}(\eta)$ and $\nabla_{\bby} f(\bbx_{k+1}, \bby_{k+1}) = \nabla_{\bby} f(\bbx_k, \bby_k) + \mathcal{O}(\eta)$. Similarly, we find the approximation of the update of $\bby$ which leads to
\begin{align}\label{hassan_2}
\bby_{k+1} &= \bby_k + \eta \nabla_{\bbx} f(\bbx_k,\bby_k) + \eta^2 \nabla_{\bby \bby} f(\bbx_k,\bby_k) \nabla_{\bby} f(\bbx_{k},\bby_{k})   \nonumber\\
&\quad - \eta^2 \nabla_{\bbx \bby}f(\bbx_k,\bby_k) \nabla_{\bbx} f(\bbx_{k},\bby_{k}) + o(\eta^2) .
\end{align}
Comparing the expressions in \eqref{eq:EG__primal_Approx} and \eqref{eq:EG__Dual_Approx} with the ones in \eqref{hassan_1} and \eqref{hassan_2} implies that the difference between the updates of PP and EG is at most $o(\eta^2)$ and this completes the proof.

\subsection{Proof of Theorem~\ref{thm:EG_bilinear}}
Define the following symmetric error matrices
\begin{align}\label{def_errors}
\bbE_x = \bbI - \eta^2  \bbB \bbB^{\top} - (\bbI + \eta^2 \bbB \bbB^{\top})^{-1} ,\qquad 
\bbE_y = \bbI - \eta^2  \bbB^{\top}  \bbB - (\bbI + \eta^2 \bbB^{\top}  \bbB)^{-1} 
\end{align}
which are useful to characterize the difference between the updates of EG and PP for a bilinear problem. Note that we can bound the norms of $\bbE_x$ and $\bbE_y$ as
\begin{align}
\|\bbE_x\| &\leq \eta^4 \sqrt{\lambda_{\max}^4(\bbB \bbB^{\top})} + \eta^6 \sqrt{ \lambda_{\max}^6(\bbB \bbB^{\top})} + \cdots \nonumber \\
&= \frac{\eta^4 \lambda_{\max}(\bbB \bbB^{\top})^2}{1 - \eta^2 \sqrt{\lambda^2_{\max}(\bbB \bbB^{\top})}},
\end{align}
and similarly 
\begin{align}
\|\bbE_y\| &\leq  \frac{\eta^4 \lambda_{\max}(\bbB^{\top} \bbB)^2}{1 - \eta^2 \sqrt{\lambda^2_{\max}(\bbB^{\top} \bbB)}}.
\end{align}
Since $\lambda_{\max}(\bbB^{\top} \bbB) =  \lambda_{\max}(\bbB \bbB^{\top})$, we have:
\begin{align}\label{err_def_1}
\|\bbE_x\|, \|\bbE_y\| &\leq  \frac{\eta^4 \lambda_{\max}(\bbB^{\top} \bbB)^2}{1 - \eta^2 \sqrt{\lambda^2_{\max}(\bbB^{\top} \bbB)}} := e
\end{align}
Also, from Lemma \ref{lemma:Matrix_follow} in the proof of Theorem \ref{thm:bilinear_ppm}, and the definitions of the error matrices in \eqref{def_errors} it can be verified that 
\begin{align}
\bbE_x \bbB &= \bbB \bbE_y \label{eq:error_flow_1} \\
\bbE_y \bbB^{\top} &= \bbB^{\top} \bbE_x \label{eq:error_flow_2} 
\end{align}
Moreover, using the definitions of $\bbE_x$ and $\bbE_y$ in \eqref{def_errors},  the EG updates can be written as
\begin{align}
\bbx_{k+1} &= (\bbI + \eta^2 \bbB \bbB^{\top})^{-1} (\bbx_k - \eta \bbB \bby_k) - \eta^3 \bbB \bbB^{\top} \bbB \bby_k + \bbE_x (\bbx_k - \eta \bbB \bby_k) \label{eq:EG_prox_P},\\
\bby_{k+1} &= (\bbI + \eta^2 \bbB^{\top} \bbB)^{-1} (\bby_k + \eta \bbB^{\top} \bbx_k) + \eta^3 \bbB^{\top} \bbB \bbB^{\top} \bbx_k + \bbE_y (\bby_k + \eta \bbB^{\top} \bbx_k) \label{eq:EG_prox_D}.
\end{align}
As in the proof of Theorem \ref{thm:bilinear_ppm}, we define $\bbQ_x = (\bbI + \eta^2 \bbB \bbB^{\top})^{-1}$ and $\bbQ_y = (\bbI + \eta^2 \bbB^{\top} \bbB)^{-1}$. Using these definitions we can show that
\begin{align}
\|\bbx_{k+1}\|^2 &= (\bbx_k - \eta \bbB \bby_k) ^{\top} \bbQ_x^2 (\bbx_k - \eta \bbB \bby_k) + \eta^6 \bby_k^{\top}  \bbB^{\top} \bbB  \bbB^{\top} \bbB  \bbB^{\top} \bbB \bby_k  \nonumber \\
& \qquad + (\bbx_k - \eta \bbB \bby_k) ^{\top} \bbE_x^2 (\bbx_k - \eta \bbB \bby_k) + 2(\bbx_k - \eta \bbB \bby_k) ^{\top} \bbE_x  \bbQ_x (\bbx_k - \eta \bbB \bby_k)  \nonumber \\
& \qquad  -2\eta^3 (\bbx_k - \eta \bbB \bby_k) ^{\top} \bbE_x  \bbB \bbB^{\top} \bbB \bby_k -2 \eta^3 \bby_k^{\top}  \bbB^{\top} \bbB \bbB^{\top}  \bbQ_x (\bbx_k - \eta \bbB \bby_k)\label{chert_1} \\
\|\bby_{k+1}\|^2 
&= (\bby_k + \eta \bbB^{\top} \bbx_k) ^{\top} \bbQ_y^2 (\bby_k + \eta \bbB^{\top} \bbx_k) + \eta^6 \bbx_k^{\top} \bbB \bbB^{\top} \bbB  \bbB^{\top} \bbB  \bbB^{\top}  \bbx_k + (\bby_k  \nonumber \\
& \qquad  + \eta \bbB^{\top} \bbx_k) ^{\top} \bbE_y^2 (\bby_k + \eta \bbB^{\top} \bbx_k) + 2(\bby_k + \eta \bbB^{\top} \bbx_k) ^{\top} \bbE_y  \bbQ_y (\bby_k + \eta \bbB^{\top} \bbx_k)  \nonumber \\
& \qquad + 2\eta^3 (\bby_k + \eta \bbB^{\top} \bbx_k) ^{\top} \bbE_y  \bbB^{\top} \bbB \bbB^{\top} \bbx_k +2 \eta^3 \bbx_k^{\top}  \bbB \bbB^{\top} \bbB  \bbQ_y (\bby_k + \eta \bbB^{\top} \bbx_k) \label{chert_2}
\end{align}
Now before adding the two sides of the expressions in \eqref{chert_1} and \eqref{chert_2}, note that some of the cross terms in \eqref{chert_1} and \eqref{chert_2} cancel out. For instance, using Lemma  \ref{lemma:Matrix_follow} and Equations \eqref{eq:error_flow_1} and \eqref{eq:error_flow_2} we can show that 
\begin{align}
- \eta^3 \bbx_k^{\top} \bbE_x \bbB \bbB^{\top} \bbB \bby_k + \eta^3 \bbx_k^{\top} \bbB \bbB^{\top}  \bbB \bbE_y \bby_k &=  - \eta^3 \bbx_k^{\top} \bbB \bbE_y \bbB^{\top} \bbB \bby_k + \eta^3 \bbx_k^{\top} \bbB \bbB^{\top}  \bbB \bbE_y \bby_k \nonumber \\
&=  - \eta^3 \bbx_k^{\top} \bbB \bbB^{\top} \bbE_x  \bbB \bby_k + \eta^3 \bbx_k^{\top} \bbB \bbB^{\top}  \bbB \bbE_y \bby_k \nonumber \\
&= - \eta^3 \bbx_k^{\top} \bbB \bbB^{\top}  \bbB \bbE_y \bby_k + \eta^3 \bbx_k^{\top} \bbB \bbB^{\top}  \bbB \bbE_y \bby_k\nonumber \\
&= 0 \nonumber
\end{align}
By using similar arguments it can be shown that summing  two sides of the expressions in \eqref{chert_1} and \eqref{chert_2} leads to 
\begin{align}
&\|\bbx_{k+1}\|^2+\|\bby_{k+1}\|^2\nonumber\\
 &=
 \bbx_k^{\top} \bbQ_x^2 \bbx_k
+ \eta^2 \bby_k^{\top}\bbB^{\top}  \bbQ_x^2 \bbB \bby_k
 + \eta^6 \bby_k^{\top}  (\bbB^{\top} \bbB)^3 \bby_k
  + \bbx_k^{\top} \bbE_x^2 \bbx_k
  +  \eta^2\bby_k^{\top} \bbB^{\top} \bbE_x^2  \bbB \bby_k  
  \nonumber \\
& \quad 
 + 2\bbx_k^{\top} \bbE_x  \bbQ_x \bbx_k 
  + 2\eta^2 \bby_k^{\top}\bbB^{\top} \bbE_x  \bbQ_x \bbB \bby_k
+2\eta^4  \bby_k^{\top}   \bbB^{\top}   \bbB \bbE_y \bbB^{\top} \bbB \bby_k 
+2 \eta^4 \bby_k^{\top}  \bbB^{\top} \bbB \bbQ_y \bbB^{\top}   \bbB \bby_k\nonumber\\
&\quad + \bby_k^{\top} \bbQ_y^2 \bby_k 
+ \eta^2 \bbx_k^{\top}\bbB \bbQ_y^2 \bbB^{\top} \bbx_k
+ \eta^6 \bbx_k^{\top} (\bbB \bbB^{\top})^3 \bbx_k
 + \bby_k^{\top} \bbE_y^2 \bby_k
  + \eta^2  \bbx_k^{\top}\bbB \bbE_y^2  \bbB^{\top} \bbx_k 
 \nonumber \\
& \quad  
+ 2\bby_k ^{\top} \bbE_y\bbQ_y \bby_k 
+ 2\eta^2\bbx_k^{\top} \bbB \bbE_y  \bbQ_y \bbB^{\top} \bbx_k
+ 2\eta^4  \bbx_k ^{\top} \bbB\bbB^{\top}\bbE_x   \bbB \bbB^{\top} \bbx_k  +2 \eta^4 \bbx_k^{\top}  \bbB \bbB^{\top} \bbQ_x\bbB    \bbB^{\top} \bbx_k 
\nonumber\\
 &=
 \bbx_k^{\top} \bbQ_x \bbx_k
 + \eta^6 \bby_k^{\top}  (\bbB^{\top} \bbB)^3 \bby_k
  + \bbx_k^{\top} \bbE_x^2 \bbx_k
  +  \eta^2\bby_k^{\top} \bbB^{\top} \bbE_x^2  \bbB \bby_k  
  \nonumber \\
& \quad 
 + 2\bbx_k^{\top} \bbE_x \bbx_k 
+2\eta^4  \bby_k^{\top}   \bbB^{\top}   \bbB \bbE_y \bbB^{\top} \bbB \bby_k 
+2 \eta^4 \bby_k^{\top}  \bbB^{\top} \bbB \bbQ_y \bbB^{\top}   \bbB \bby_k\nonumber\\
&\quad + \bby_k^{\top} \bbQ_y \bby_k 
+ \eta^6 \bbx_k^{\top} (\bbB \bbB^{\top})^3 \bbx_k
 + \bby_k^{\top} \bbE_y^2 \bby_k
  + \eta^2  \bbx_k^{\top}\bbB \bbE_y^2  \bbB^{\top} \bbx_k 
 \nonumber \\
& \quad  
+ 2\bby_k ^{\top} \bbE_y \bby_k 
+ 2\eta^4  \bbx_k ^{\top} \bbB\bbB^{\top}\bbE_x   \bbB \bbB^{\top} \bbx_k  +2 \eta^4 \bbx_k^{\top}  \bbB \bbB^{\top} \bbQ_x\bbB    \bbB^{\top} \bbx_k 
\end{align}
where in the second equality we used the simplifications
\begin{align}
&\bbx_k^{\top} \bbQ_x^2 \bbx_k+ \eta^2 \bbx_k^{\top}\bbB \bbQ_y^2 \bbB^{\top} \bbx_k
= \bbx_k^{\top} \bbQ_x^2 \bbx_k+ \eta^2 \bbx_k^{\top}\bbQ_x^2\bbB \bbB^{\top}  \bbQ_x\bbx_k=  \bbx_k^{\top} \bbQ_x \bbx_k\nonumber\\
& \bby_k^{\top} \bbQ_y^2 \bby_k  + \eta^2 \bby_k^{\top}\bbB^{\top}  \bbQ_x^2 \bbB \bby_k
=  \bby_k^{\top} \bbQ_y^2 \bby_k + \eta^2 \bby_k^{\top}\bbQ_y^2\bbB^{\top} \bbB \bbQ_y\bby_k=  \bby_k^{\top} \bbQ_y \bby_k
\end{align}
as well as
\begin{align}
&\bbx_k^{\top} \bbE_x  \bbQ_x \bbx_k + \eta^2\bbx_k^{\top} \bbB \bbE_y  \bbQ_y \bbB^{\top} \bbx_k
= \bbx_k^{\top} \bbE_x  \bbQ_x \bbx_k + \eta^2\bbx_k^{\top}  \bbE_x\bbQ_x \bbB  \bbB^{\top} \bbx_k = \bbx_k^{\top} \bbE_x   \bbx_k
 \nonumber\\
&\bby_k ^{\top} \bbE_y\bbQ_y  \bby_k  +  \eta^2 \bby_k^{\top}\bbB^{\top} \bbE_x  \bbQ_x \bbB \bby_k =
\bby_k ^{\top} \bbE_y\bbQ_y  \bby_k  +  \eta^2 \bby_k^{\top}\bbE_y  \bbQ_y \bbB^{\top} \bbB \bby_k=\bby_k ^{\top} \bbE_y  \bby_k 
\end{align}
Define $r_k = \|\bbx_k\|^2 + \|\bby_k\|^2$. We have:
\begin{align}
& r_{k+1} \leq  \nonumber \\
& \bbx_k^{\top} (\bbQ_x + 2\bbE_x + \bbE_x^2 + \eta^6 (\bbB \bbB^{\top})^3 + \eta^2 \bbB \bbE_y^2 \bbB^{\top} + 2\eta^4 \bbB \bbB^{\top} \bbE_x \bbB \bbB^{\top} + 2\eta^4 \bbB \bbB^{\top} \bbQ_x \bbB \bbB^{\top} ) \bbx_k \nonumber \\
& + \bby_k^{\top} (\bbQ_y + 2\bbE_y + \bbE_y^2 + \eta^6 (\bbB^{\top} \bbB)^3 + \eta^2 \bbB^{\top} \bbE_x^2 \bbB + 2\eta^4 \bbB^{\top} \bbB \bbE_y \bbB^{\top} \bbB + 2\eta^4 \bbB^{\top} \bbB \bbQ_y \bbB^{\top} \bbB) \bby_k
\end{align}
Choosing $\eta = \frac{1}{2 \sqrt{2 \lambda_{\max} (\bbB^{\top} \bbB)}} $, we have:
\begin{align}
r_{k+1} &\leq \bbx^{\top}_k (\bbI - \frac{1}{2}\eta^2\bbB \bbB^{\top} + \frac{1}{4}\eta^2 \bbB \bbB^{\top}  ) \bbx_k + \bby_k^{\top} (\bbI - \frac{1}{2}\eta^2\bbB^{\top} \bbB + \frac{1}{4}\eta^2 \bbB^{\top} \bbB) \bby_k \nonumber \\
&\leq \left( 1 - \frac{1}{20 \kappa} \right) r_k
\end{align}

\subsection{Proof of Theorem~\ref{thm:EG_scv}}

Define $\bbz = [\bbx; \bby]$ and $F(\bbz) = [\nabla_{\bbx} f(\bbx, \bby) ; -\nabla_{\bby} f(\bbx, \bby)]$. Then the EG updates can be compactly written as:
\begin{align}
\bbz_{k+1} = \bbz_k - \eta F(\bbz_{k+1/2})
\end{align}
where
\begin{align}
\bbz_{k+1/2} = \bbz_k - \eta F(\bbz_{k})
\end{align}
We write the update in terms of the Proximal Point method with an error $\bbvarepsilon_k = \eta(F(\bbz_{k+1}) - F(\bbz_{k+1/2}))$ as follows:
\begin{align}
\bbz_{k+1} = \bbz_k - \eta F(\bbz_{k+1}) + \bbvarepsilon_k
\end{align}
On squaring and simplifying this expression, we have:
\begin{align}
\| \bbz_{k+1} - \bbz^* \|^2 = \| \bbz_k - \bbz^*\|^2 - \| \bbz_{k+1} - \bbz_{k} \|^2 - 2\eta (F(\bbz_{k+1}) + \bbvarepsilon_k)^{\top} (\bbz_{k+1} - \bbz^*)
\label{eq:EG_Main_1}
\end{align}
where $\bbz^* = [ \bbx^*; \bby^*]$ (Note that $r_k = \|\bbz_k - \bbz^*\|^2$). We simplify the right hand side of Equation \eqref{eq:EG_Main_1} as follows-
\begin{align}
& \| \bbz_k - \bbz^*\|^2 - \| \bbz_{k+1} - \bbz_{k} \|^2 - 2\eta (F(\bbz_{k+1}) + \bbvarepsilon_k)^{\top} (\bbz_{k+1} - \bbz^*) \nonumber \\
&= \| \bbz_k - \bbz^*\|^2 - \| \bbz_{k+1} - \bbz_{k} \|^2 - 2\eta (F(\bbz_{k+1/2}))^{\top} (\bbz_{k+1} - \bbz^*) \nonumber \\
&= \| \bbz_k - \bbz^*\|^2  - 2\eta (F(\bbz_{k+1/2}))^{\top} (\bbz_{k+1} - \bbz^*) - \| \bbz_{k+1} - \bbz_{k+1/2} + \bbz_{k+1/2} -\bbz_{k} \|^2 \nonumber \\
&=  \| \bbz_k - \bbz^*\|^2  - 2\eta (F(\bbz_{k+1/2}))^{\top} (\bbz_{k+1} - \bbz^*) - \| \bbz_{k+1} - \bbz_{k+1/2}\|^2 - \|\bbz_{k+1/2} -\bbz_{k} \|^2 \nonumber \\
& \qquad \qquad - 2(\bbz_{k+1} - \bbz_{k+1/2})^{\top} (\bbz_{k+1/2} -\bbz_{k}) \nonumber \\
&= \| \bbz_k - \bbz^*\|^2  - 2\eta (F(\bbz_{k+1/2}))^{\top} (\bbz_{k+1} - \bbz^*) - \| \bbz_{k+1} - \bbz_{k+1/2}\|^2 - \|\bbz_{k+1/2} -\bbz_{k} \|^2 \nonumber \\
& \qquad \qquad - 2\eta (F(\bbz_{k}))^{\top} (\bbz_{k+1/2} -\bbz_{k+1}) 
\label{eq:EG_Main_2}
\end{align}
The following part of the proof is inspired by the result of Theorem $1$ of \cite{gidel2018variational}. We simplify the inner products and give a lower bound using strong convexity as follows:
\begin{align}
2 & \eta (F(\bbz_{k+1/2}))^{\top} (\bbz_{k+1} - \bbz^*) + 2\eta (F(\bbz_{k}))^{\top} (\bbz_{k+1/2} -\bbz_{k+1}) \nonumber \\
&= 2\eta (F(\bbz_{k+1/2}))^{\top} (\bbz_{k+1/2} - \bbz^*) + 2\eta (F(\bbz_{k}) - F(\bbz_{k+1/2}))^{\top} (\bbz_{k+1/2} -\bbz_{k+1}) \nonumber \\
&\geq 2\eta \mu \| \bbz_{k+1/2} - \bbz^* \| - 2\eta L \| \bbz_{k} - \bbz_{k+1/2} \| \| \bbz_{k+1/2} - \bbz_{k+1} \| 
\end{align}
since $F(\bbz^*) = 0$. Now, using Young's inequality, we have:
\begin{align}
2 & \eta (F(\bbz_{k+1/2}))^{\top} (\bbz_{k+1} - \bbz^*) + 2\eta (F(\bbz_{k}))^{\top} (\bbz_{k+1/2} -\bbz_{k+1}) \nonumber \\
&\geq 2\eta \mu \| \bbz_{k+1/2} - \bbz^* \|^2 - (\eta^2L^2 \| \bbz_{k} - \bbz_{k+1/2} \| + \| \bbz_{k+1/2} - \bbz_{k+1} \| ^2)
\end{align}
Substituting the above inequality in Equation \eqref{eq:EG_Main_2}, we have:
\begin{align}
\| \bbz_{k+1} - \bbz^*\|^2 \leq \|\bbz_k - \bbz^*\|^2 - 2\eta \mu \| \bbz_{k+1/2} - \bbz^* \|^2 + (\eta^2L^2 -1) \| \bbz_{k} - \bbz_{k+1/2} \| \end{align}
Since $\| \bbz_{k+1/2} - \bbz^*\|^2 \leq 2\| \bbz_k - \bbz^*\|^2 + 2\| \bbz_{k+1/2} - \bbz_k\|^2$, we have:
\begin{align}
\| \bbz_{k+1} - \bbz^*\|^2 \leq (1 - \eta \mu) \|\bbz_k - \bbz^*\|^2 + (\eta^2L^2 + 2\eta \mu -1) \| \bbz_{k} - \bbz_{k+1/2} \| 
\end{align}
For $\eta = 1/4L$, we have $\eta^2L^2 + 2\eta \mu -1 < 1$ (since $\mu \leq L$), which gives:
\begin{align}
\| \bbz_{k+1} - \bbz^*\|^2 \leq \left(1 - \frac{1}{4\kappa} \right) \|\bbz_k - \bbz^*\|^2
\end{align}
where $\kappa = \frac{L}{\mu}$.

\end{document}